\newcommand{\ds}{\displaystyle}
\newcommand{\N}{{\mathbb N}}
\newcommand{\R}{{\mathbb R}}
\newcommand{\norm}[1]{\left\Vert#1\right\Vert}
\newcommand{\inner}[2]{\langle #1, #2 \rangle}
\newcommand{\set}[1]{\left\{#1\right\}}
\newcommand{\paren}[1]{\left(#1\right)}
\newcommand{\parenc}[1]{\left[#1\right]}
\newtheorem{theorem}{Theorem}
\newtheorem{lemma}{Lemma}
\newtheorem{proposition}{Proposition}
\newtheorem{corollary}{Corollary}
\newtheorem{remark}{Remark}
\newtheorem{example}{Example}
\DeclareMathOperator{\argmin}{argmin}
\title{Towards faster first order methods: A continuous-time model to interpolate between speed and function value restart}
\author{
Juan Jos\'e Maul\'en$^{1,2}$ \and
Huiyuan Guo$^{3}$ \and
Juan Peypouquet$^{3}$
}
\date{}
\begin{document}
\maketitle

\begingroup

\footnotetext[1]{\small Instituto de Ciencias de la Ingeniería, Universidad de O’Higgins, Rancagua, Chile.}
\footnotetext[2]{\small Centro de Modelamiento Matem\'atico (CNRS IRL2807), Universidad de Chile, Santiago, Chile.}
\footnotetext[3]{\small Bernoulli Institute for Mathematics, Computer Science and Artificial Intelligence, University of Groningen, The Netherlands.}

\endgroup

\maketitle

\begin{abstract}

We introduce a new restarting scheme for a continuous inertial dynamics with Hessian driven-damping, and establish a linear convergence rate for the function values along the restarted trajectories. The proposed routine is implemented without knowing the strong convexity parameter, and is a generalization of existing speed restart schemes. It interpolates between speed and function value restarts, considerably delaying the restarting time, while preserving convergence and function value decrease. Numerical experiments show an improvement in the convergence rates for both continuous-time dynamical systems, and the associated accelerated first-order algorithms derived via time discretization. \\

\textbf{Keywords:} Convex optimization $\cdot$ Hessian driven damping $\cdot$ Accelerated methods $\cdot$ Restarting. \\

\textbf{MSC2020:} 37N40 $\cdot$ 65K10 $\cdot$ 90C25.
\end{abstract}

\section{Introduction}

Sequences generated by Nesterov's accelerated gradient method (NAG) \cite{Nesterov1983AMF} applied to a smooth convex function (or FISTA \cite{fista}, for the nonsmooth case) 
converge, in the sense of function values, at a rate of $o(\frac{1}{k^2})$, where $k$ is the number of iterations \cite{attouch2016rate}. This is faster than standard gradient descent, whose convergence rate is $o(\frac{1}{k})$. The key to this improvement is an {\it iteration-dependent} extrapolation coefficient that modulates the momentum. For strongly convex functions, using a {\it constant} extrapolation coefficient adapted to the specific strong convexity parameter (NAG-SC), one obtains a linear convergence rate \cite{nesterov2003introductory}, which is considerably faster than the one achieved by gradient descent (see also \cite{wang2025acceleratedgradientmethodsinertial} for a more general range of parameters and a discussion thereof). Not having access to the strong convexity parameter, one can still use the original version, obtaining a linear convergence rate which is comparable to that of gradient descent \cite{li2024shi}.

In the search for a deeper understanding of (NAG), a second-order ordinary differential equation with an asymptotically vanishing damping term
\begin{equation}\label{eq:avd}\tag{AVD}
    \ddot{x}(t) + \frac{\alpha}{t}\dot{x}(t) + \nabla \phi(x(t)) =0
\end{equation}
which provides a continuous-time model for (NAG), was studied in \cite{SBC2016}. For $\alpha>3$, each trajectory generated by such a differential equation converges to a minimizer, strongly if the function is strongly convex, weakly in general \cite{APR2018}. The function values converge at a rate of $o\left(\frac{1}{t^2}\right)$ for convex functions \cite{May2017}. 

In contrast with (NAG), the convergence is {\it not} linear for strongly convex functions. Instead, it is capped at $\mathcal O\left(\frac{1}{t^3}\right)$, even if the function involved is quadratic. In view of this, the authors of \cite{SBC2016} developed a {\it speed restarting} scheme that guarantees linear convergence of the function values for strongly convex functions. Roughly speaking, whenever the speed of the trajectory ceases to increase, one stops, resets the time to zero and uses the current state as the new starting point. 
A rather obvious alternative is to restart the process when the function values stop decreasing (see \cite{o2015adaptive} for the algorithmic version). Unfortunately, although progress has been made in this direction \cite{bao2024accelerated,giselsson2014monotonicity,lin2014adaptive}, fully establishing a linear convergence rate for this restarting policy remains as an open problem. Another approach is to apply restarts at a fixed intervals (or number of iterations), strategy introduced in \cite{necoara2019linear,nesterov2013gradient}. However, these relies on accurate knowledge of certain parameters, such as the strong convexity constant or the optimal function value, which makes them unpractical for applications. Estimation tools for this have been proposed in \cite{alamo2022restart,aujol2025fista,roulet2017sharpness}. Other restarting schemes are specifically tailored to FISTA \cite{alamo2019gradient,alamo2019restart,aujol2025fista,fercoq2019adaptive}, primal-dual splitting algorithms \cite{applegate2023faster}, Schwarz methods \cite{park2021accelerated}, stochastic gradient descent \cite{wang2022scheduled}, or also for nonconvex  \cite{li2023restarted} and multi-objective optimization problems \cite{luo2025accelerated}.  \\

One the other hand, the solutions of \eqref{eq:avd}, as well as the sequences generated by (NAG), show heavy oscillations of the function values. To mitigate this effect, a variant with a Hessian-driven damping term was introduced in \cite{attouch2016fast}, based on \cite{AABR2002}, namely:
\begin{equation}\label{eq:din_avd}\tag{DIN-AVD}
    \ddot{x}(t) + \frac{\alpha}{t}\dot{x}(t) + \beta \nabla^2 \phi(x(t))\dot{x}(t) + \nabla \phi(x(t)) =0.
\end{equation}
Surprisingly, by adding a time-dependent coefficient in the gradient term, one recovers a {\it high-resolution} approximation of (NAG) \cite{shi2022understanding}:
\begin{equation}\label{eq:hr_dinavd}\tag{HR-DIN-AVD}
  \ddot{x}(t) + \frac{\alpha}{t}\dot{x}(t) + \beta \nabla^2 \phi(x(t))\dot{x}(t) + \left( \gamma + \frac{r}{t}\right)\nabla \phi(x(t)) =0.  
\end{equation}
The asymptotic properties of \eqref{eq:hr_dinavd} have been studied in \cite{attouch2020first,shi2022understanding}. Notably, linear convergence of the function values in the strongly convex case has been established in \cite{li2024linear,wang2025fast}. These convergence results rely on assumptions on the parameters that depend on the strong convexity constant. It is not known whether linear convergence holds for \eqref{eq:din_avd}.

Besides the theoretical results on convergence and stabilization obtained in \cite{attouch2016fast}, simulations suggest that, although the solutions of \eqref{eq:din_avd} are {\it lazier} than those of \eqref{eq:avd} for small $t$, they end up taking over and remain more stable. A simple example can be illustrative: 

\begin{example}\label{ex:intr}
Consider the function $\phi:\R^3\to\R$, defined by 
\begin{equation}\label{eq:fun_example}
    \phi(x) = \frac{1}{2}\paren{x_1^2+\rho x_2^2+\rho^2 x_3^2},
\end{equation}
with $\rho>1$. The solutions of \eqref{eq:din_avd} present an oscillatory behavior, that are tempered in the presence of the Hessian-driven damping when $\beta>0$, as shown in Figure \ref{fig:comp_DINAVD}. Solutions of \eqref{eq:hr_dinavd} are also depicted, where a similar behavior as \eqref{eq:din_avd} can be observed.
\end{example}

\begin{figure}[h]
    \centering
    \subfigure[$\rho=1$]{\includegraphics[width=0.48\textwidth]{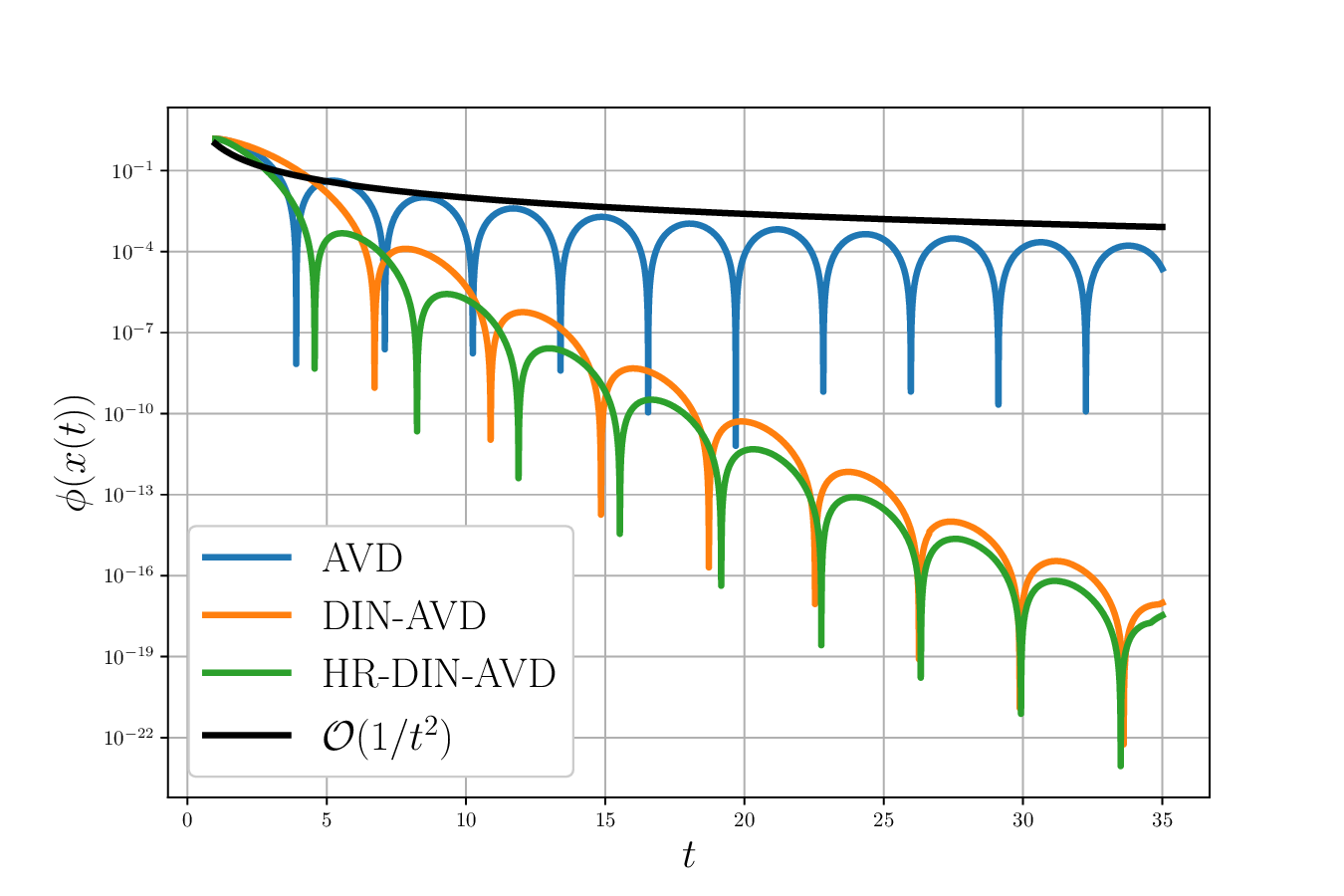}}
    \subfigure[$\rho=10$]{\includegraphics[width=0.48\textwidth]{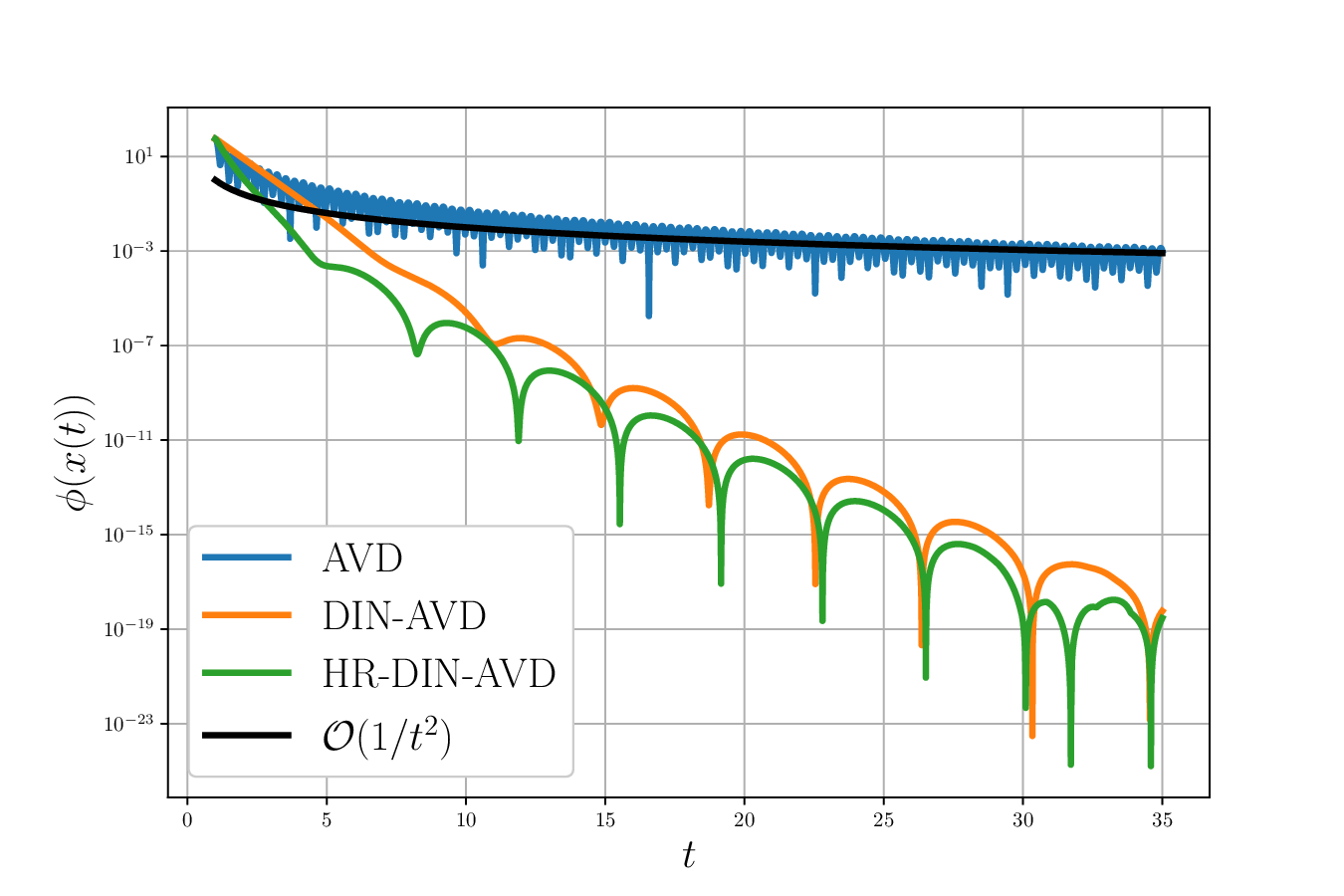}}
    \caption{Function values along the solutions of \eqref{eq:din_avd} for the function $\phi$ in Example \ref{ex:intr}, with $\alpha=3$, $\beta=1$, initial condition $x(1)=(1,1,1)$ and $\dot{x}(1)=0$, on the interval $[1,35]$. For \eqref{eq:hr_dinavd}, we consider $\gamma=1$ and $r=\frac{\alpha\beta}{2}$ to fulfill the hypotheses in \cite{wang2025fast}.}
    \label{fig:comp_DINAVD}
\end{figure}

Now, when the speed restarting scheme is applied to \eqref{eq:din_avd}, linear convergence of the function values is still guaranteed \cite{maulen2023speed}, even when the objective function is not strongly convex but has quadratic growth. Moreover, numerical simulations show that the combination of restart {\it and} Hessian-driven damping tends to yield better performance than each technique applied alone. The case of constant coefficients, in line with (NAG-SC), was studied in \cite{guo2024speed}. Unlike (NAG-SC), one can get linear convergence rate without knowing the strong convexity parameter $\mu$, by applying speed restart scheme.  \\

The aforementioned works on speed restart provide theoretical linear convergence rates, but the restart occurs significantly earlier than for the function values restart. Although there is no convergence guarantee for the latter, it does seem to be a competitive heuristic. In order to seize this improvement opportunity, we propose an {\it extended} speed restarting scheme, whose restarting time is closer to the function value restarting time while preserving the linear convergence guarantee without knowing the strong convexity parameter $\mu$.

The paper is organized as follows. In Section \ref{sec:restart_main} we recall some properties of the solutions of \eqref{eq:din_avd}, mostly borrowed from \cite{maulen2023speed}. The proposed restarting scheme is described in Section \ref{sec:thetrue_restartmain}, together with the precise statement of our main result, namely Theorem \ref{T:TheTheorem}, which guarantees the linear convergence of the function values on the restarted trajectories. Lower and upper bounds for the extended restarting times are established in Sections \ref{sec:lower_bound} and \ref{sec:upper_bound}, respectively. In Section \ref{sec:main_theo} we quantify the function value decrease between restarts, and complete the proof of Theorem \ref{T:TheTheorem}. Section \ref{sec:numerical} contains numerical experiments that illustrate how our extended speed restarting scheme improves the convergence of \eqref{eq:din_avd}, and how, used as a heuristic, it enhances the performance of the corresponding restarted algorithms obtained via time discretization. Beyond standard convex optimization benchmarks, we also present an application to kernel regularized learning. This example illustrates how the proposed methodology, originally developed in a purely theoretical framework, can be naturally incorporated into algorithms arising in machine learning. Finally, conclusions and open questions are discussed in Section \ref{sec:conculsion}.

\section{The solutions of \eqref{eq:din_avd}}\label{sec:restart_main}

Throughout this paper, $\phi:\R^n \to \R$ is a twice continuously differentiable convex function, which attains its minimum $\phi^*$ at $x^*$. We assume, moreover, that $\phi$ is \textit{strongly convex} with parameter $\mu > 0$, and that its gradient $\nabla\phi$ is Lipschitz continuous with constant $L>0$. This implies that
\begin{equation}\label{eq:strong1}
    2\mu(\phi(x) - \phi^*) \leq \norm{\nabla \phi(x)}^2
\end{equation}
and
\begin{equation}\label{eq:strong2}
    \langle\nabla^2\phi(x)v,v\rangle \ge\mu\|v\|^2,
\end{equation}
for all $x,v \in \R^n$. \\

For every $z\in\R^n$, there is a unique function $x_z \in \mathcal{C}^2\paren{\paren{0,+\infty};\R^n}\cap \mathcal{C}^1\paren{[0,+\infty);\R^n}$ such that $x_z(0)=z$, $\dot{x}_z(0)=0$ and $x=x_z$ satisfies \eqref{eq:din_avd} for every $t>0$ (see \cite[Theorem 2.1]{maulen2023speed}). Such a function is the (lazy) \textit{solution} or \textit{trajectory} of \eqref{eq:din_avd} starting from $z$. \\

Now, by rewriting equation \eqref{eq:din_avd} as
$$\dfrac{d}{dt}(t^\alpha\dot{x}(t))  = - \beta t^\alpha\nabla^2 \phi(x(t))\dot{x}(t) - t^\alpha\nabla \phi(x(t)),$$
and integrating over $[0,t]$, to get 
\begin{align}\label{eq:eqI} 
-t^\alpha\dot{x}(t) & = \beta   \int_{0}^{t}u^\alpha\nabla^2 \phi(x(u))\dot{x}(u)\, du + \int_{0}^{t}u^\alpha \nabla \phi(x(u))\, du \nonumber \\
& = I_z(t) + \dfrac{ t^{\alpha+1}}{\alpha+1}\nabla \phi(z),
\end{align}
where 
\begin{equation}\label{E:def_I}
    I_z(t):=\beta   \int_{0}^{t}u^\alpha\nabla^2 \phi(x(u))\dot{x}(u)\, du + \int_{0}^{t} u^\alpha(\nabla \phi(x(u))- \nabla \phi(z))\, du.
\end{equation}

It is possible to estimate $\|\dot x_z(t)\|$, $\|I_z(t)\|$ and $\dot I_z(t)$ in terms of $\norm{\nabla \phi (z)}$ and some functions of $t$, which are  {\it universal}, in the sense that they do not depend on the function $\phi$ or the initial state $z$. Following \cite{maulen2023speed}, set
\begin{equation} \label{E:H}
    H(t)=1 -\dfrac{\beta Lt}{(\alpha+2)} - \dfrac{ Lt^2}{2(\alpha+3)}.
\end{equation}
The function $H$ is concave, quadratic, does not depend on $z$, and has exactly one positive zero, given by
\begin{equation} \label{E:tau1}
    \tau_1=- \left(\dfrac{ \alpha+3}{\alpha+2}\right)\beta+\sqrt{\left(\dfrac{ \alpha+3}{\alpha+2}\right)^2\beta^2 + \dfrac{2(\alpha+3)}{L}}.
\end{equation}
In particular, $H$ decreases strictly from $1$ to $0$ on $[0,\tau_1]$, and there is exactly one value $\tau_2 \in (0,\tau_1)$, namely
\begin{equation}\label{E:tau2}
    \tau_2 = -\paren{\dfrac{\alpha+3}{\alpha+2}}\beta+ \sqrt{\paren{\dfrac{\alpha+3}{\alpha+2}}^2\beta^{2}+\dfrac{\alpha+3}{L}},
\end{equation}
for which $H(\tau_2) = 1/2$. The following result gathers some estimations from \cite{maulen2023speed}, that will prove useful for our purpose:

\begin{proposition} \label{P:I_and_x}
For every $t \in (0,\tau_1)$, we have
\begin{eqnarray*}
    (\alpha+1)H(t)\|\dot x_z(t)\| 
    & \le & t\norm{\nabla \phi (z)} \\
    (\alpha+1)H(t)\norm{I_z(t)}  
    & \le & t^{\alpha+1}\big(1-H(t)\big) \norm{\nabla \phi (z)} \\
    (\alpha+1)H(t)\left\|\dot I_z(t)\right\|
     & \le & t^{\alpha+1}\left[\beta L+\dfrac{ Lt}{2}\right] \norm{\nabla \phi (z)}
\end{eqnarray*}
Moreover, for every $t \in (0,\tau_2)$, we have
\begin{equation} \label{E:reverse}
t\big(2H(t)-1\big)\norm{\nabla\phi(z)}\le (\alpha+1)H(t)\norm{\dot{x}_z(t)}.    
\end{equation}
\end{proposition}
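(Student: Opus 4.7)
The plan is a bootstrap continuity argument built directly on the identity \eqref{eq:eqI}. Setting $v(t)=\|\dot x_z(t)\|$, that identity immediately gives $v(t)\le t^{-\alpha}\|I_z(t)\|+\frac{t}{\alpha+1}\|\nabla\phi(z)\|$, so the whole task reduces to controlling $\|I_z(t)\|$ under a linear-in-$t$ assumption on $v$. The \emph{conditional estimate} I will establish is: if $v(s)\le sC$ for all $s\in[0,t]$, then $\|I_z(t)\|\le Ct^{\alpha+1}(1-H(t))$. In \eqref{E:def_I}, the Lipschitz bounds $\|\nabla^2\phi(x(u))\dot x(u)\|\le Lv(u)\le LuC$ and $\|\nabla\phi(x(u))-\nabla\phi(z)\|\le L\int_0^u v(s)\,ds\le Lu^2C/2$ integrate to $\beta LC\,t^{\alpha+2}/(\alpha+2)$ and $LC\,t^{\alpha+3}/(2(\alpha+3))$ respectively, whose sum is exactly $Ct^{\alpha+1}(1-H(t))$ by definition \eqref{E:H} of $H$.

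Combining the two displays, I get $v(t)\le tC(1-H(t))+\frac{t}{\alpha+1}\|\nabla\phi(z)\|$, which is $\le tC$ precisely when $C\ge\|\nabla\phi(z)\|/((\alpha+1)H(t))$. I will close the loop by fixing $\tau\in(0,\tau_1)$, setting $C_\tau=\|\nabla\phi(z)\|/((\alpha+1)H(\tau))$, and letting $T$ be the supremum of the $t\le\tau$ on which $v(s)\le sC_\tau$ holds throughout $[0,t]$. A short analysis of \eqref{eq:eqI} near zero shows $v(s)/s\to\|\nabla\phi(z)\|/(\alpha+1)<C_\tau$, so $T>0$. If $T<\tau$, the conditional estimate combined with the strict monotonicity $H(T)>H(\tau)$ on $(0,\tau_1)$ gives $v(T)<TC_\tau$, so by continuity the bound extends slightly past $T$, contradicting maximality. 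Hence $T=\tau$, which is the first estimate; the second is then the conditional bound evaluated with $C=C_t$; and the third follows by differentiating \eqref{E:def_I}, giving $\dot I_z(t)=\beta t^\alpha\nabla^2\phi(x(t))\dot x(t)+t^\alpha(\nabla\phi(x(t))-\nabla\phi(z))$, then inserting the first estimate together with $\|x(t)-z\|\le\int_0^t v(s)\,ds\le t^2\|\nabla\phi(z)\|/(2(\alpha+1)H(t))$, obtained using monotonicity of $1/H$ on $[0,\tau_1]$.

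For the reverse inequality \eqref{E:reverse}, I apply the triangle inequality to \eqref{eq:eqI} in the other direction, $\frac{t^{\alpha+1}}{\alpha+1}\|\nabla\phi(z)\|\le t^\alpha v(t)+\|I_z(t)\|$, substitute the just-proved second estimate, and rearrange to isolate the factor $H(t)-(1-H(t))=2H(t)-1$, which is strictly positive on $(0,\tau_2)$ by definition \eqref{E:tau2}. The delicate part of the whole argument is really just the conditional estimate: keeping the coefficients aligned so that the Lipschitz integrations collapse exactly to $1-H(t)$, rather than into a looser function of $t$, is what makes the bootstrap close; everything after that is continuity, Lipschitz bookkeeping, and the triangle inequality.
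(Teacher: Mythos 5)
Your proof is correct, and it follows essentially the same route as the source this proposition is quoted from (the paper itself defers the proof to \cite{maulen2023speed}): a bootstrap on the maximal interval where $\|\dot x_z(s)\|\le sC$ holds, with $H$ defined precisely so that the two Lipschitz integrations in $I_z$ collapse to $1-H(t)$, followed by the triangle inequality in both directions on \eqref{eq:eqI}. No gaps worth flagging.
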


\section{Restarting schemes} \label{sec:thetrue_restartmain}

A {\it restarting scheme} is a mechanism that consists in stopping the evolution of a time process, whenever a specified criterion is met, and then using its current state as the initial condition for a new process. The pieces obtained inductively by this procedure are then concatenated to define a process over the whole time horizon. \\

Given $z\in\R^n$, the {\it restarting time} $T(z)$ is the first time the (yet to be specified) criterion is satisfied for the process $X_z$ starting from $z$. The {\it restarted trajectory} starting from $z_0\in \R^n$ is the function $\bm{x}_{z_0}:[0,\infty)\to\R^n$ defined inductively as follows:
\begin{enumerate}
    \item Compute $T_1=T(z_0)$, and set $\bm{x}_{z_0}(t)=X_{z_0}(t)$, for $t\in[0,T_1]$, as well as $z_1=\bm{x}_{z_0}(T_1)$.
    \item For $k\ge 1$, having defined $T_k$ and $z_k$, compute $T_{k+1}=T_k+T(z_k)$, in order to define $\bm{x}_{z_0}(t)=X_{z_k}(t-T_k)$, for $t\in[T_k,T_{k+1}]$, as well as $z_{k+1}=\bm{x}_{z_0}(T_{k+1})$.
\end{enumerate}

\subsection{From restarting schemes to linear convergence} \label{SS:schemes}

A restarting scheme for \eqref{eq:din_avd} is {\it proper} if 
\begin{itemize} 
    \item [i)] there exist $\tau_*,\tau^*>0$ such that $\tau_*\le T(z) \leq \tau^*$, for every $z\notin\argmin(\phi)$; 
    \item [ii)] $t\mapsto \phi\big(x_z(t)\big)$ is nonincreasing on $[0,T(z)]$; and 
    \item[iii)] there is $Q\in(0,1)$ such that $\phi\big(x_z(T(z))\big)-\phi^* \le Q\big(\phi(z)-\phi^*\big)$
    for every $z\notin\argmin(\phi)$. 
\end{itemize}

The {\it speed restarting time}, introduced in \cite{SBC2016} for $(\alpha,\beta)=(3,0)$, and extended in \cite{maulen2023speed} to $(\alpha,\beta)\in(0,\infty)\times[0,\infty)$, is proper. The restarting time is given by
$$T^0(z) = \inf\bigg\{ t >0\, :\, \frac{1}{2}\frac{d}{dt}\|\dot x_z(t)\|^2 \le 0 \bigg\}.$$  

In a proper restarting scheme, the restarting time cannot be larger than the {\it function value restarting time}
$$T^{\rm fv}(z) = \inf \set{t > 0 \, : \, \dfrac{d}{dt}\phi(x_z(t))\geq 0}.$$

The relevance of proper schemes is explained by the following result, which gathers the essence of the strategy used in \cite{SBC2016,maulen2023speed} to establish the linear convergence of speed restarting schemes:

\begin{lemma} \label{L:abstract_linear}
Consider a proper restarting scheme for \eqref{eq:din_avd}, and let $\bm{x}_{z_0}$ be the restarted trajectory with initial state $z_0$. For every $t>0$, we have
$$\phi(\bm{x}_{z_0}(t)) - \phi^* \leq Ce^{-\kappa t}\big(\phi(z_0)-\phi^*\big),$$
where $C = Q^{-1}$ and $\kappa = -\frac{1}{\tau^*}\ln(Q)>0$.
\end{lemma}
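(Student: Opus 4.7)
The plan rests on the three defining properties of a proper scheme, combined via a one-line induction and an elementary translation between the discrete index counting restarts and the continuous time $t$. Fix $t > 0$, let $T_0 := 0$, and let $k \ge 0$ be the unique integer with $T_k \le t < T_{k+1}$.

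The first step is to establish geometric decay at the restart points, namely
\[
\phi(z_k) - \phi^* \le Q^k\big(\phi(z_0) - \phi^*\big) \qquad \text{for all } k \ge 0,
\]
by induction on $k$. The base case $k=0$ is trivial. For the inductive step, if $z_{k-1} \notin \argmin(\phi)$ property (iii), applied to the trajectory $X_{z_{k-1}}$, gives $\phi(z_k) - \phi^* \le Q(\phi(z_{k-1}) - \phi^*)$; the case $z_{k-1} \in \argmin(\phi)$ is trivial, since then $\phi(z_{k-1}) = \phi^*$ and property (ii) forces $\phi$ to remain at $\phi^*$ throughout the piece, so $\phi(z_k) = \phi^*$ as well.

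The second step extends this estimate to all times. On the interval $[T_k, T_{k+1}]$ the restarted trajectory coincides with a time-shift of $X_{z_k}$, and property (ii) tells us that $\phi \circ X_{z_k}$ is nonincreasing on $[0, T(z_k)]$. Hence
\[
\phi\big(\bm{x}_{z_0}(t)\big) - \phi^* \le \phi(z_k) - \phi^* \le Q^k\big(\phi(z_0) - \phi^*\big).
\]

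The last step converts $Q^k$ into the advertised continuous decay. Property (i) yields $T_{k+1} = \sum_{j=0}^{k} T(z_j) \le (k+1)\tau^*$, and since $t < T_{k+1}$ we deduce $k+1 > t/\tau^*$. Because $Q \in (0,1)$,
\[
Q^k = Q^{-1}\, Q^{k+1} \le Q^{-1}\, Q^{t/\tau^*} = C\, e^{-\kappa t},
\]
with $C = Q^{-1}$ and $\kappa = -\tau^{*-1}\ln Q > 0$, which completes the proof. There is no real obstacle here; the argument is a clean bookkeeping of the three properties. Worth flagging is that only the upper bound $\tau^*$ enters the exponential rate, while the lower bound $\tau_*$ in (i) plays no role in this lemma (its purpose is to ensure that only finitely many restarts occur in any bounded interval, so that the restarted trajectory is well defined on $[0, \infty)$).
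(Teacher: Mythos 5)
Your proof is correct and follows essentially the same route as the paper's: geometric decay at the restart points via property (iii), monotonicity between restarts via property (ii), and conversion of the restart count into continuous time via the upper bound $\tau^*$, yielding the same constants $C=Q^{-1}$ and $\kappa=-\tau^{*-1}\ln Q$. Your bookkeeping (indexing by $T_k\le t<T_{k+1}$ rather than by the largest $m$ with $m\tau^*\le t$) and your explicit treatment of the case $z_{k-1}\in\argmin(\phi)$ are slightly more careful than the paper's, but the argument is the same.
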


\begin{proof}
The existence of the lower bound $\tau_*$ guarantees that $\bm{x}_{z_0}(t)$ is defined for all $t>0$. Now, given $t>0$, let $m$ be the largest positive integer such that $m\tau^* \leq t$. By time $t$, the trajectory will have been restarted at least $m$ times. By definition, $(m+1)\tau^* > t$, which gives $m > \frac{t}{\tau^*} -1$. Let $\tau_t$ be the largest restarting time such that $\tau_t\le t$ (it is {\it at least} the $m$-th). Using property ii) and, inductively applying property iii), we deduce that 
\[\phi(\bm{x}_{z_0}(t)) - \phi^*\leq\phi\big(\bm{x}_{z_0}(\tau_t)\big)  - \phi^* \leq Q^m (\phi(z_0)-\phi^*) \leq Q^{\frac{t}{\tau^*} -1}(\phi(z_0)-\phi^*),\]
as claimed. 
\end{proof}

The numerical evidence in \cite{maulen2023speed,SBC2016} reveals that the speed restarting time can be considerably shorter than the function value restart. In some cases, this may hinder the effectiveness of the method, and limit its applicability to discrete-time algorithms. In order to delay the restarting time, we propose a procedure that aims to interpolate between the {\it vanilla} speed restart and the function value restart (which it precisely does, when $\beta=0$).



 

\subsection{An extended speed restart for \eqref{eq:din_avd}}

Let $x_z$ be the solution of \eqref{eq:din_avd} starting from $z\notin\argmin(\phi)$. For $t>0$ and $\lambda\in[0,1]$, set
$$\varphi_{z, \lambda}(t)=\dfrac{1}{2}\dfrac{d}{dt} \norm{\dot{x}_z(t)}^2+\lambda\dfrac{\alpha}{t}\norm{\dot{x}_{z}(t)}^2.$$
The ($\lambda$-extended) {\it speed restarting time} is
\begin{equation} \label{E:ESRestart}
T^\lambda(z) = \inf\bigg\{ t >0\, :\, \varphi_{z, \lambda}(t) \le 0 \bigg\}.    
\end{equation}
Using \eqref{eq:din_avd}, we have
\begin{eqnarray}
\dfrac{d}{dt}\phi\big(x_z(t)\big) + \varphi_{z, \lambda}(t) 
& = & \inner{\nabla \phi\big(x_z(t)\big)}{\dot{x}_z(t)} +  \inner{\ddot{x}_z(t)}{\dot{x}_z(t)} + \lambda\dfrac{\alpha}{t}\norm{\dot{x}_z(t)}^2  \nonumber \\
& = & -\dfrac{\alpha(1-\lambda)}{t}\norm{\dot{x}_z(t)}^2 \nonumber \\
&& -\beta\langle\nabla^2\phi\big(x_z(t)\big)\dot x_z(t),\dot x_z(t)\rangle \label{E:restarting_times_1}\\
& \le & -\left[\dfrac{\alpha(1-\lambda)}{t} + \beta\mu\right]\norm{\dot{x}_z(t)}^2 \label{E:restarting_times_2}\\
& \le & 0. \nonumber 
\end{eqnarray}

When $\beta=0$, by setting $\lambda=1$ and using \eqref{E:restarting_times_1}, we see that
$$\dfrac{d}{dt}\phi\big(x_z(t)\big) + \varphi_{z, 1}(t)\equiv 0,$$
and so $T^1(z)=T^{\rm fv}(z)$, which makes $T^\lambda$ an interpolation between $T^0$ and $T^{\rm fv}$. In general, for every $z\in\R^n$, we have
$$T^0(z) \le T^\lambda(z) \le T^{\rm fv}(z),$$
where the second inequality is an immediate consequence of \eqref{E:restarting_times_2}. \\ 

Figure \ref{fig:twoaxis} displays the values of $\varphi_{z,\lambda}(t)$ and $\phi(x_z(t))$, where $x_z(t)$ is solution of \eqref{eq:din_avd}, where $\alpha=3$, $\beta\in\{0,1\}$, and the function $\phi$ is that of Example \ref{ex:intr} with $\rho=1$. One can clearly appreciate the delays the activation of the $\lambda$-extended speed restart, as $\lambda$ increases.

\begin{figure}[h]
    \subfigure[$\beta=0$]{\includegraphics[width=0.45\textwidth]{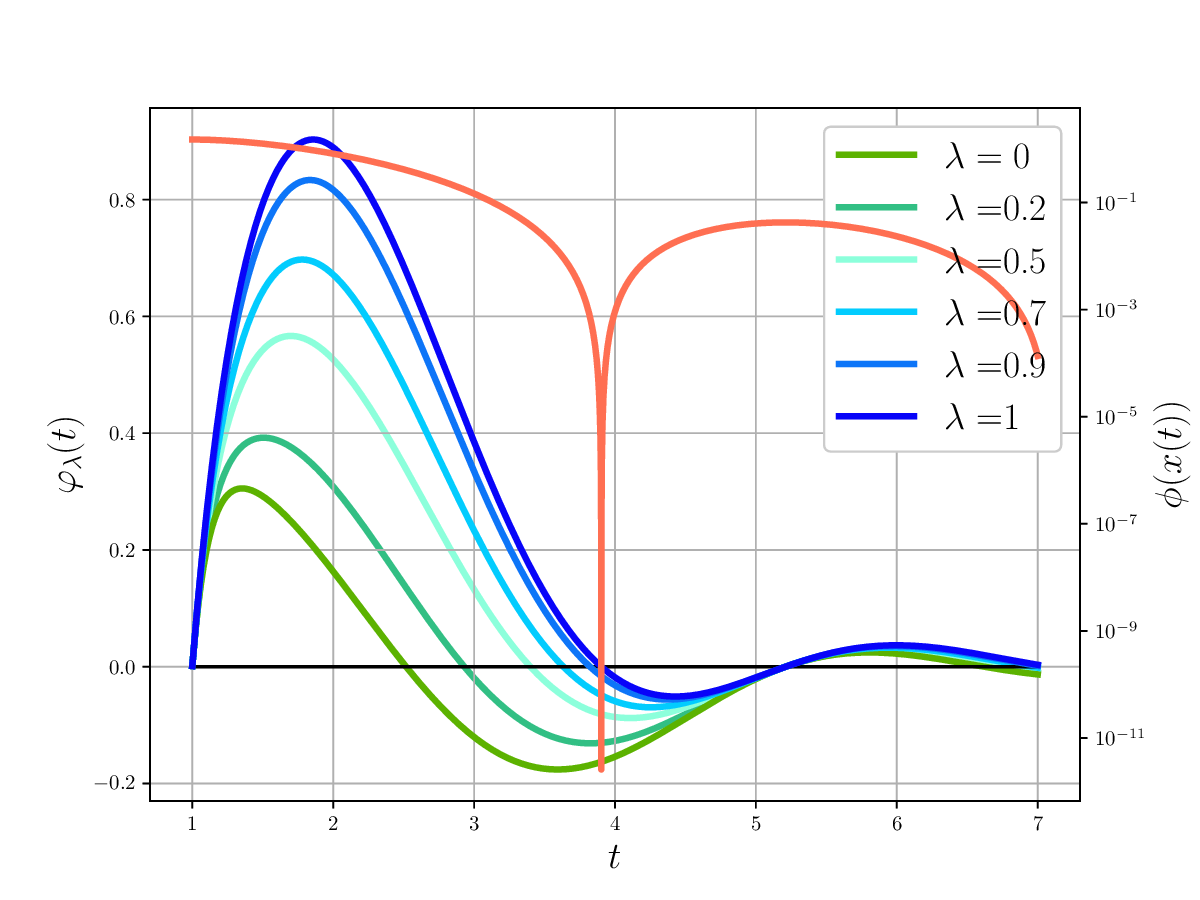}}
    \subfigure[$\beta=1$]{\includegraphics[width=0.45\textwidth]{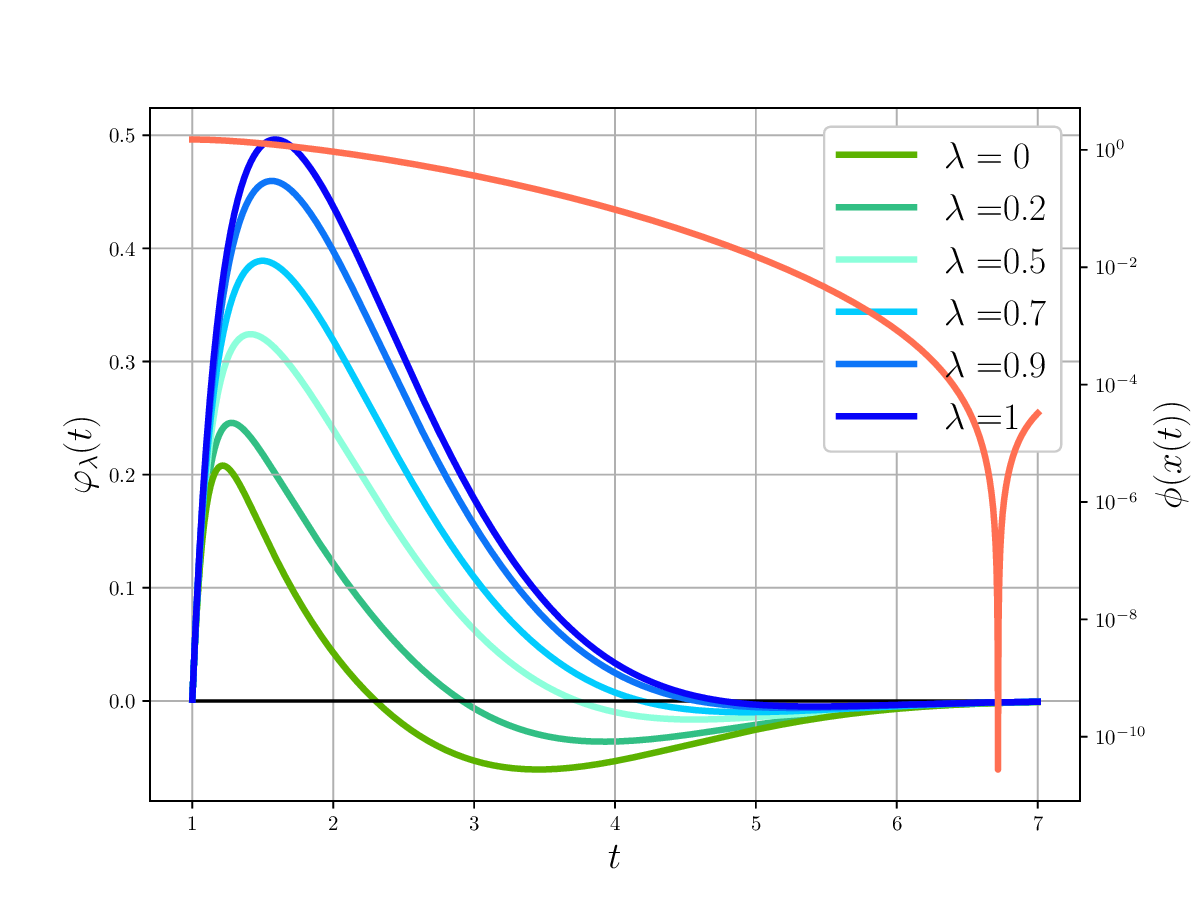}}
    \caption{In blue-green, we show the graph of $\varphi_{z,\lambda}(t)$ in the context of Example \ref{ex:intr}, with  $\rho=1$, $z=(1,1,1)$, $\alpha=3$. The function values $\phi(x_z(t))$ are displayed in red. }
    \label{fig:twoaxis}
\end{figure}

Our main result is the following:

\begin{theorem} \label{T:TheTheorem}
Let $\phi:\R^n \to \R$ be a twice continuously differentiable $\mu$-strongly convex function which attains its minimum value $\phi^*$ at $x^*\in\R^n$. Assume that $\nabla \phi$ is $L$-Lipschitz continuous, and fix $\alpha,\beta > 0$ and $\lambda \in [0,\frac{1}{2\alpha}]$. There exist $C\ge 0$ and $\kappa>0$ such that, for every $z_0 \in \R^n$ and $t>0$, the restarted trajectory $\bm{x}_{z_0}$ be  defined by \eqref{eq:din_avd}, starting from $z_0$, satisfies
\[\phi\big(\bm{x}_{z_0}(t)\big) - \phi^* \leq Ce^{-\kappa t}(\phi(z_0) - \phi^*).\]
\end{theorem}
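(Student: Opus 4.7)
By Lemma \ref{L:abstract_linear}, it suffices to show that the $\lambda$-extended speed restart is proper, i.e., to exhibit universal constants $0<\tau_*\le\tau^*$ and $Q\in(0,1)$ for which the three defining properties hold. Property ii), the monotonicity of $t\mapsto\phi(x_z(t))$ on $[0,T^\lambda(z)]$, is immediate: by definition of $T^\lambda(z)$ as an infimum, $\varphi_{z,\lambda}(t)\ge 0$ on that interval, so \eqref{E:restarting_times_2} gives $\frac{d}{dt}\phi(x_z(t))\le 0$.

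\textbf{Lower bound on $T^\lambda(z)$.} Using \eqref{eq:din_avd} I would rewrite
$$\varphi_{z,\lambda}(t) = -\frac{\alpha(1-\lambda)}{t}\|\dot x_z(t)\|^2 - \beta\langle\nabla^2\phi(x_z(t))\dot x_z(t),\dot x_z(t)\rangle - \langle\nabla\phi(x_z(t)),\dot x_z(t)\rangle,$$
and substitute the representation $\dot x_z(t)=-t^{-\alpha}I_z(t)-\frac{t}{\alpha+1}\nabla\phi(z)$ from \eqref{eq:eqI}. To leading order as $t\to 0$ the last summand produces $\frac{(1+\alpha\lambda)t}{(\alpha+1)^2}\|\nabla\phi(z)\|^2$, while the first two are controlled via the universal bounds of Proposition \ref{P:I_and_x} together with the Lipschitz inequality for $\nabla\phi$. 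Careful bookkeeping yields $\varphi_{z,\lambda}(t)\ge c_1 t\|\nabla\phi(z)\|^2$ for all $t$ in some universal interval $(0,\tau_*]$, so $T^\lambda(z)\ge\tau_*$ whenever $z\notin\argmin(\phi)$.

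\textbf{Upper bound on $T^\lambda(z)$.} I expect this step, which uses strong convexity and the hypothesis $\lambda\le\frac{1}{2\alpha}$ in an essential way, to be the main technical obstacle. While $\varphi_{z,\lambda}(t)>0$, the very definition forces $\frac{d}{dt}\|\dot x_z(t)\|^2\ge -\frac{2\lambda\alpha}{t}\|\dot x_z(t)\|^2$, so $t\mapsto t^{2\lambda\alpha}\|\dot x_z(t)\|^2$ is nondecreasing. Evaluating the reverse estimate \eqref{E:reverse} at $t=\tau_2/2$ (where $H>1/2$) gives a universal lower bound on $\|\dot x_z(\tau_2/2)\|$ in terms of $\|\nabla\phi(z)\|$, hence the polynomial bound $\|\dot x_z(t)\|^2\ge c_3 t^{-2\lambda\alpha}\|\nabla\phi(z)\|^2$ on $[\tau_2/2,T^\lambda(z))$. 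On the other hand, integrating \eqref{E:restarting_times_2} yields the dissipation inequality $\beta\mu\int_0^T\|\dot x_z(t)\|^2\,dt\le\phi(z)-\phi^*\le\frac{1}{2\mu}\|\nabla\phi(z)\|^2$. Since $2\lambda\alpha\le 1$, the integral $\int_{\tau_2/2}^T t^{-2\lambda\alpha}\,dt$ diverges as $T\to\infty$, so comparing the two bounds forces $T$ to be at most a universal $\tau^*$.

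\textbf{Contraction and conclusion.} Integrating \eqref{E:restarting_times_2} over $[0,\tau_*]\subseteq[0,T^\lambda(z)]$ and invoking the lower bound $\varphi_{z,\lambda}(t)\ge c_1 t\|\nabla\phi(z)\|^2$ already established, I would obtain $\phi(z)-\phi(x_z(T^\lambda(z)))\ge c_2\|\nabla\phi(z)\|^2$ for a universal $c_2>0$. Combining with the strong-convexity estimate \eqref{eq:strong1} produces $\phi(x_z(T^\lambda(z)))-\phi^*\le Q(\phi(z)-\phi^*)$ with $Q:=1-2c_2\mu$, which can be arranged to lie in $(0,1)$ by shrinking $c_2$ if necessary. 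All three properties of a proper restart being verified, Lemma \ref{L:abstract_linear} then yields the claimed exponential decay with $\kappa$ and $C$ determined by $Q$ and $\tau^*$.
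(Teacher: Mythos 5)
Your proposal is correct and follows essentially the same route as the paper: reduce to properness via Lemma \ref{L:abstract_linear}, get the lower bound on $T^\lambda(z)$ from the integral representation \eqref{eq:eqI} together with Proposition \ref{P:I_and_x}, get the upper bound from the monotonicity of $t^{2\alpha\lambda}\|\dot x_z(t)\|^2$ combined with the dissipation inequality, the reverse estimate \eqref{E:reverse} and \eqref{eq:strong1}, and obtain the contraction factor $Q$ by integrating the decay of $\phi(x_z(\cdot))$ near $t=0$. The only cosmetic difference is that in the contraction step you reuse the lower bound on $\varphi_{z,\lambda}$ where the paper invokes \eqref{E:reverse} a second time; both give a universal $Q\in(0,1)$.
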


In view of Lemma \ref{L:abstract_linear}, it suffices to prove that the $\lambda$-extended speed restart is proper, which we do in the next few sections, culminating in Proposition \ref{P:conclusion}. Property ii) is a consequence of the fact that $T^\lambda \le T^{\rm fv}$. On the other hand, since $T^0 \le T^\lambda$ and the speed restart is proper, the existence of $\tau_*$ in property i) is guaranteed. However, the purpose of introducing this new scheme is to delay the restarting time. The quantification of this delay is of the utmost relevance, and the object of the next section.

\section{Lower bound for the restarting time} \label{sec:lower_bound}

The purpose of this section is to approximate
$$\bm T_*  = \inf\set{T^\lambda(z) : z \notin \argmin(\phi)},$$
for which we first establish the following estimation:

\begin{lemma} \label{lem:technical}
    Let $x_z$ be the lazy solution of \eqref{eq:din_avd} starting from $z\notin\argmin(\phi)$, with $\alpha>0$ and $\beta\ge 0$. For every $\lambda \in [0,1]$ and $t\in(0,\tau_1)$, we have
    $$
    \inner{\dot{x}_z(t)}{\ddot{x}_z(t)} + \lambda\frac{\alpha}{t}\|\dot x_z(t)\|^2
    \ge 
    G(t)\frac{t\,\|\nabla \phi(z)\|^2}{(\alpha+1)^2H(t)^2}.
    $$
where 
\begin{eqnarray*}
    G(t)
&= & (1+\alpha\lambda)H(t)^2
-\alpha(1-\lambda)\big(1-H(t)\big)^2  
- \beta Lt-\dfrac{ Lt^2}{2} \\
&& - \big|1+\alpha(2\lambda-1)\big|H(t)\big(1-H(t)\big).
\end{eqnarray*}
\end{lemma}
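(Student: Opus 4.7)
The plan is to use the integrated form of \eqref{eq:din_avd} in \eqref{eq:eqI} to express both $\dot x_z(t)$ and $\ddot x_z(t)$ as linear combinations of the ``universal'' quantities $I_z(t)$, $\dot I_z(t)$ and $\nabla\phi(z)$, then to expand the quantity of interest, and finally to apply the estimates of Proposition \ref{P:I_and_x}. Writing $g=\nabla\phi(z)$, \eqref{eq:eqI} gives
$$\dot x_z(t) = -\frac{1}{t^\alpha}I_z(t) - \frac{t}{\alpha+1}g,$$
and a direct differentiation of this relation yields
$$\ddot x_z(t) = \frac{\alpha}{t^{\alpha+1}}I_z(t) - \frac{1}{t^\alpha}\dot I_z(t) - \frac{1}{\alpha+1}g.$$

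Next, I would expand $\langle\dot x_z,\ddot x_z\rangle$ and $\|\dot x_z\|^2$ bilinearly in the three vectors $I_z$, $\dot I_z$ and $g$, add them with the appropriate weights, and collect the five resulting terms: one in $\|I_z\|^2$, one in $\|g\|^2$, and three cross-terms $\langle I_z,g\rangle$, $\langle I_z,\dot I_z\rangle$ and $\langle g,\dot I_z\rangle$. A short computation shows that the coefficient of $\|g\|^2$ is $\frac{(1+\alpha\lambda)t}{(\alpha+1)^2}$, the coefficient of $\|I_z\|^2$ is $-\frac{\alpha(1-\lambda)}{t^{2\alpha+1}}$, and the coefficient of $\langle I_z,g\rangle$ is $\frac{1+\alpha(2\lambda-1)}{(\alpha+1)t^\alpha}$, while the cross-terms with $\dot I_z$ carry coefficients $\frac{1}{t^{2\alpha}}$ and $\frac{1}{(\alpha+1)t^{\alpha-1}}$ respectively.

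To obtain a lower bound, I would keep the $\|g\|^2$ term and apply Cauchy--Schwarz, together with the three estimates from Proposition \ref{P:I_and_x}, to the four remaining terms, in each case replacing the quantity by its negative absolute value bound. Factoring out the common factor $\frac{t\,\|g\|^2}{(\alpha+1)^2 H(t)^2}$ then identifies, respectively, the contributions $(1+\alpha\lambda)H(t)^2$, $-\alpha(1-\lambda)(1-H(t))^2$, and $-|1+\alpha(2\lambda-1)|H(t)(1-H(t))$. The two $\dot I_z$ terms contribute respectively $-(1-H(t))(\beta Lt+Lt^2/2)$ and $-H(t)(\beta Lt+Lt^2/2)$, which add up cleanly to $-\beta Lt - Lt^2/2$, giving exactly the expression of $G(t)$ in the statement.

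The calculation is conceptually routine but the main obstacle is bookkeeping: one must track the signs and the common factor $\frac{t}{(\alpha+1)^2 H(t)^2}$ consistently, and in particular notice the cancellation between the $H(t)$ and $1-H(t)$ multipliers in the two $\dot I_z$-cross-terms, which is what makes the $\beta Lt + Lt^2/2$ contribution independent of $H(t)$. The requirement $t\in(0,\tau_1)$ appears solely to ensure that the denominators $H(t)$ appearing in the estimates of Proposition \ref{P:I_and_x} are strictly positive.
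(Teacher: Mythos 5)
Your proposal is correct and follows essentially the same route as the paper: express $\dot x_z$ and $\ddot x_z$ through $I_z$, $\dot I_z$ and $\nabla\phi(z)$ via \eqref{eq:eqI}, expand, and apply Cauchy--Schwarz with the estimates of Proposition \ref{P:I_and_x}; all your coefficients check out. The only (immaterial) difference is that the paper bounds the term $-t^{-\alpha}\langle\dot I_z,\dot x_z\rangle$ in one step using the estimate on $\|\dot x_z\|$, whereas you split $\dot x_z$ into its $I_z$ and $\nabla\phi(z)$ parts first --- the two yield the same contribution $-\beta Lt-Lt^2/2$, precisely because the paper's bound on $\|\dot x_z\|$ is obtained from that same decomposition.
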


\begin{proof}
From \eqref{eq:eqI}, we know that 
\begin{equation}\label{eq:dotx}
 \dot{x}_z(t)= -\dfrac{1}{t^\alpha}I_z(t) -  \dfrac{ t}{\alpha+1}\nabla \phi (z),
\end{equation}
whence
$$\ddot x_z(t)
=-\dfrac{d}{dt}\left[ \dfrac{1}{t^\alpha}I_z(t)\right] -\dfrac{1}{\alpha+1}\nabla \phi (z) 
= \dfrac{\alpha}{t^{\alpha+1}} I_z(t) - \dfrac{1}{t^\alpha}\dot I_z(t)-\dfrac{1}{\alpha+1}\nabla \phi (z).$$
It follows that
\begin{eqnarray*}
\langle\ddot x_z(t),\dot x_z(t)\rangle 
& = & - \left\langle \dfrac{\alpha}{t^{\alpha+1}} I_z(t) -\dfrac{1}{\alpha+1}\nabla \phi (z), \dfrac{1}{t^\alpha}I_z(t) + \dfrac{ t}{\alpha+1}\nabla \phi (z)\right\rangle \\ 
&&\ - \dfrac{1}{t^\alpha}\langle \dot I_z(t),\dot x_z(t)\rangle  \\
& = & -\frac{\alpha}{t^{2\alpha+1}}\|I_z(t)\|^2 
+ \frac{ t\|\nabla\phi(z)\|^2}{(\alpha+1)^2}
+ \frac{(1-\alpha)}{(\alpha+1)t^\alpha}\langle I_z(t),\nabla\phi(z)\rangle \\ 
&&\ 
- \dfrac{1}{t^\alpha}\langle \dot I_z(t),\dot x_z(t)\rangle. \\
\lambda\frac{\alpha}{t}\|\dot x_z(t)\|^2
& = & \lambda \left[ \frac{\alpha}{t^{2\alpha+1}}\|I_z(t)\|^2+\frac{\alpha t\|\nabla \phi(z)\|^2}{(\alpha+1)^2}+\frac{2\alpha}{(\alpha+1)t^\alpha}\langle I_z(t),\nabla \phi(z)\rangle\right].
\end{eqnarray*}
Adding these two equalities, we obtain
\begin{eqnarray*}
\inner{\ddot{x}_z(t)}{\dot{x}_z(t)} + \lambda\frac{\alpha}{t}\|\dot x_z(t)\|^2 & = & \dfrac{\alpha(\lambda-1)}{t^{2\alpha+1}}\norm{I_z(t)}^2 + \dfrac{(\lambda\alpha+1)}{(\alpha+1)^2}t \norm{\nabla \phi(z)}^2  \\
& & - \dfrac{1}{t^\alpha}\inner{\dot{I}_z(t)}{\dot{x}_z(t)} \\
& & +\  \dfrac{1}{(\alpha+1)t^\alpha}(1+\alpha(2\lambda-1))\inner{I_z(t)}{\nabla\phi(z)} \\
& \ge & \dfrac{\alpha(\lambda-1)}{t^{2\alpha+1}}\norm{I_z(t)}^2 + \dfrac{(\lambda\alpha+1)}{(\alpha+1)^2}t \norm{\nabla \phi(z)}^2  \\
& & - \dfrac{1}{t^\alpha}\norm{\dot{I}_z(t)}\norm{\dot{x}_z(t)} \\
&& -\ \dfrac{1}{(\alpha+1)t^\alpha}\big|1+\alpha(2\lambda-1)\big|\norm{I_z(t)}\norm{\nabla\phi(z)}.
\end{eqnarray*}
Then, we use the first three estimations in Proposition \ref{P:I_and_x} to bound the terms on the right-hand side, obtaining
\begin{eqnarray*}
   \inner{\ddot{x}_z(t)}{\dot{x}_z(t)} + \lambda\frac{\alpha}{t}\|\dot x_z(t)\|^2 & \geq & 
   G(t)\dfrac{ t\norm{\nabla \phi (z)}^2}{(\alpha+1)^2H(t)^2},
\end{eqnarray*}
where 
\begin{eqnarray*}
    G(t)
&= & (1+\alpha\lambda)H(t)^2
-\alpha(1-\lambda)\big(1-H(t)\big)^2  
- \beta Lt-\dfrac{ Lt^2}{2} \\
&& - \big|1+\alpha(2\lambda-1)\big|H(t)\big(1-H(t)\big),
\end{eqnarray*}
as stated.
\end{proof}

Since $G(0)=1+\alpha\lambda>0$, Lemma \ref{lem:technical} shows that the restart cannot occur before the first zero of $G$, which must be located before $\tau_2$ because
\begin{align*}
    G(\tau_2)&<\frac{1}{4}\left[1+\alpha\lambda-\alpha(1-\lambda)-\big|1+\alpha(2\lambda-1)\big|\right]\\
    &=\frac{1}{4}\left[1+\alpha(2\lambda-1)-\big|1+\alpha(2\lambda-1)\big|\right]\\
    &\le 0.
\end{align*}
Moreover, the function $G$ is strictly decreasing on $(0,\tau_2)$. Indeed,
\begin{eqnarray*}
G'(t)
& = & 2(1+\alpha\lambda)H(t)H'(t)
+2\alpha(1-\lambda)\big(1-H(t)\big)H'(t)   \\
&& -\ \beta L-Lt -\big|1+\alpha(2\lambda-1)\big|H'(t)\big(1-2H(t)\big) \\
& \le & \Big[2(1+\alpha\lambda)H(t)
+2\alpha(1-\lambda)\big(1-H(t)\big) \Big]H'(t) \\
& = & \Big[2\alpha(1-\lambda)+2\left[1+\alpha(2\lambda-1)\right]H(t)\Big]H'(t)    
\end{eqnarray*}
because $2H(t)\ge 1$ on  $(0,\tau_2)$, and $H'(t)<0$ on $(0,\infty)$. If $1+\alpha(2\lambda-1)\le 0$, we use the fact that $H(t)\le 1$ to get
$$2\alpha(1-\lambda)+2\left[1+\alpha(2\lambda-1)\right]H(t) 
\ge 2\alpha-2\alpha\lambda+2+4\alpha\lambda-2\alpha 
= 2(1+\alpha\lambda),$$
and so $G'(t)<0$. On the other hand, if $1+\alpha(2\lambda-1)\ge 0$, then
$$2\alpha(1-\lambda)+2\left[1+\alpha(2\lambda-1)\right]H(t)  
 \ge 2\alpha(1-\lambda),$$
and $G'(t)<0$ as well. As a consequence, there is a unique $\tau_3\in (0,\tau_2)$ such that $G(\tau_3)=0$, and $G(t)>0$ on $[0,\tau_3)$. From Lemma \ref{lem:technical}, we obtain:

\begin{corollary} \label{C:lower_bound}
For any $\lambda \in [0,1]$, $\alpha>0$, $\beta \geq 0$, we have $\tau_1 > \tau_2 > \tau_3 >0$ and $\tau_3 \leq \bm T_*$. 
\end{corollary}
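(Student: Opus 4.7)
The statement is essentially a bookkeeping corollary that gathers the chain of inequalities $\tau_1>\tau_2>\tau_3>0$ and turns the technical estimate of Lemma \ref{lem:technical} into a uniform lower bound on the extended speed-restart time. I would structure the proof in two short steps.

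First, I would verify the ordering $\tau_1>\tau_2>\tau_3>0$. The inequality $\tau_1>\tau_2>0$ is immediate from the properties of $H$ recalled in the paper: $H$ is concave quadratic with $H(0)=1$, $H(\tau_1)=0$, and $H$ strictly decreases from $1$ to $0$ on $[0,\tau_1]$, so the unique point $\tau_2$ where $H(\tau_2)=1/2$ lies strictly between $0$ and $\tau_1$. The existence and uniqueness of $\tau_3\in(0,\tau_2)$ is already contained in the paragraph preceding the corollary: the computation there shows $G(0)=1+\alpha\lambda>0$, $G(\tau_2)\le 0$ (with strict inequality unless $1+\alpha(2\lambda-1)=0$, and even then the argument reduces to $G(\tau_2)<0$), and $G$ is strictly decreasing on $(0,\tau_2)$. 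Applying the intermediate value theorem together with strict monotonicity yields a unique $\tau_3\in(0,\tau_2)$ with $G(\tau_3)=0$ and $G(t)>0$ on $[0,\tau_3)$.

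Second, I would prove $\tau_3\le \bm T_*$ by bounding $\varphi_{z,\lambda}(t)$ below uniformly in $z\notin\argmin(\phi)$ for $t\in(0,\tau_3)$. Fix such a $z$; strong convexity implies $\nabla\phi(z)\ne 0$. By the definition of $\varphi_{z,\lambda}$, Lemma \ref{lem:technical} gives
\[
\varphi_{z,\lambda}(t) \;=\; \inner{\dot x_z(t)}{\ddot x_z(t)} + \lambda\frac{\alpha}{t}\|\dot x_z(t)\|^2 \;\ge\; G(t)\,\frac{t\,\|\nabla\phi(z)\|^2}{(\alpha+1)^2 H(t)^2}
\]
for every $t\in(0,\tau_1)$. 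On $(0,\tau_3)$ we have $G(t)>0$ by Step 1 and $H(t)>0$ because $\tau_3<\tau_1$, so the right-hand side is strictly positive. Consequently $\varphi_{z,\lambda}(t)>0$ throughout $(0,\tau_3)$, which by the definition \eqref{E:ESRestart} of $T^\lambda(z)$ forces $T^\lambda(z)\ge \tau_3$. Taking the infimum over $z\notin\argmin(\phi)$ gives $\bm T_*\ge \tau_3$.

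I do not expect any genuine obstacle here: the heavy lifting was done in Lemma \ref{lem:technical} and in the monotonicity computation for $G$ that precedes the corollary. The only delicate point worth stating carefully is that the lower bound is \emph{uniform} in $z$, which is clear because $\tau_3$ depends only on $\alpha$, $\beta$, $\lambda$, and $L$ through $H$ and $G$, not on $z$; the factor $\|\nabla\phi(z)\|^2$ is positive for every $z\notin\argmin(\phi)$ and need not be bounded away from zero, since we only need strict positivity to conclude that the restart criterion has not yet been met.
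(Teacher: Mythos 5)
Your proof is correct and follows essentially the same route as the paper: the ordering $\tau_1>\tau_2>\tau_3>0$ comes from the properties of $H$ together with the sign and monotonicity analysis of $G$ carried out just before the corollary, and the bound $\tau_3\le \bm T_*$ follows by identifying $\varphi_{z,\lambda}(t)$ with the left-hand side of Lemma~\ref{lem:technical} and noting that $G(t)>0$ and $\|\nabla\phi(z)\|>0$ force $\varphi_{z,\lambda}(t)>0$ on $(0,\tau_3)$, uniformly in $z$. The only cosmetic remark is that your parenthetical casework on $G(\tau_2)$ is unnecessary: the strict inequality $G(\tau_2)<\tfrac14\bigl[1+\alpha(2\lambda-1)-|1+\alpha(2\lambda-1)|\bigr]\le 0$ already holds in all cases because of the strictly negative term $-\beta L\tau_2-L\tau_2^2/2$.
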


\subsection{A few remarks on the values of $\tau_3$}

In general, $\tau_3$ is the first positive root of a polynomial of degree at most four. For certain values of the parameters, $\tau_3$ admits a relatively simple expression that is useful to compare, for instance, with the values obtained in \cite{maulen2023speed,SBC2016}. \\

\noindent{\bf Case I.} If $\alpha\ge 1+2\alpha\lambda$, we have
$$G(t) = 1 + \alpha\lambda  - \beta L t \paren{\dfrac{2\alpha+3}{\alpha+2}} -  L t^2 \paren{\dfrac{\alpha+2}{\alpha+3}},$$ 
and so
$$\tau_3 =  -\dfrac{(2\alpha+3)(\alpha+3)}{2(\alpha+2)^2}\beta + \sqrt{\dfrac{(2\alpha+3)^2(\alpha+3)^2}{4(\alpha+2)^4}\beta^2+\dfrac{(1+\alpha\lambda)(\alpha+3)}{ L(\alpha+2)}}.  
$$

The combination $(\alpha,\beta,\lambda)=(3,0,0)$ was analyzed in \cite{SBC2016}, where they provide $\frac{4}{5\sqrt{L}}$. Our formula gives $\tau_3=\sqrt{\frac{6}{5L}}$, which is larger by a factor of $\sqrt{\frac{15}{8}}\sim 1.37$. Keeping $(\alpha,\beta)=(3,0)$, choosing $\lambda=\frac{1}{2\alpha}=\frac{1}{6}$ gives $\tau_3= \frac{3}{\sqrt{5L}}$, which is larger than the value in \cite{SBC2016} by a factor of 
$\frac{3\sqrt{5}}{4}\sim 1.68$. Notice also that $\lambda=1$ gives $\tau_3=\sqrt{\frac{3}{L}}$, which is larger by a factor of $\frac{5\sqrt{3}}{4}\sim 2.17$. \\

\noindent{\bf Case II.} If $\alpha < 1+2\alpha\lambda$ , the case $\beta=0$ is still simple. We get 
\[G(t) = 1 + \alpha\lambda - \dfrac{2\alpha\lambda+3}{\alpha+3}Lt^2 + \dfrac{2\alpha\lambda - \alpha +1}{2(\alpha+3)^2}L^2t^4,\]
thus
$$\tau_3=\sqrt{\frac{\alpha+3}{(2\alpha\lambda-\alpha+1)L}\left(2\alpha\lambda+3-\sqrt{2\alpha^2\lambda+6\alpha\lambda+2\alpha+7}\right)}.$$
A special case, whose relevance will become apparent in the next section, is when $\alpha=\frac{1}{2}$ and $\lambda=1$, which gives
$$\tau_3=\sqrt{\frac{7(8-\sqrt{46})}{6L}}\sim \frac{1.19}{\sqrt{L}}.$$
On the other hand, for $\alpha=1$ and $\lambda=\frac{1}{2}$, we get $$\tau_3=2\sqrt{\frac{4-\sqrt{13}}{L}}\sim\frac{1.26}{\sqrt{L}}.$$





\section{Upper bound for the restarting time} \label{sec:upper_bound}

We now turn our attention to 
\[\bm T^* := \sup\set{T^\lambda(z): \, z \notin \argmin(\phi)},\]
for which we can prove the following:

\begin{proposition}\label{p:upperbound_existence} 
Let $x_z$ be the lazy solution of \eqref{eq:din_avd} starting from $z \notin \argmin(\phi)$. Let $\phi$ be $\mu$-strongly convex. For each $\lambda \in \parenc{0,\frac{1}{2\alpha}}$, $\bm T^*<\infty$.
\end{proposition}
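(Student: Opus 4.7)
The plan is to combine an energy-dissipation identity with the monotonicity reading of the $\lambda$-extended restart criterion, bootstrapped by the reverse speed estimate \eqref{E:reverse} at the time $\tau_3$ provided by Corollary \ref{C:lower_bound}. To prepare this, a direct computation gives
$$\varphi_{z,\lambda}(t)=\dfrac{1}{2t^{2\alpha\lambda}}\dfrac{d}{dt}\!\left[t^{2\alpha\lambda}\norm{\dot{x}_z(t)}^2\right],$$
so $T^\lambda(z)$ is precisely the first time at which $t\mapsto t^{2\alpha\lambda}\norm{\dot{x}_z(t)}^2$ ceases to be strictly increasing; in particular, this map is nondecreasing on $[0,T^\lambda(z)]$.

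Next, taking the inner product of \eqref{eq:din_avd} with $\dot{x}_z(t)$ and using \eqref{eq:strong2} on the Hessian term, I obtain
$$\dfrac{d}{dt}\!\left[\phi(x_z(t))+\tfrac{1}{2}\norm{\dot x_z(t)}^2\right]\le -\beta\mu\norm{\dot x_z(t)}^2.$$
Integrating on $[0,T]$, with $\dot x_z(0)=0$ and $\phi(x_z(T))\ge\phi^*$, gives the $T$-uniform bound
$$\int_0^T\norm{\dot x_z(t)}^2\,dt\le \dfrac{\phi(z)-\phi^*}{\beta\mu},$$
valid for every $T>0$; this is the only place where $\beta>0$ is used essentially.

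Assuming $T^\lambda(z)>\tau_3$ (otherwise there is nothing to prove), the monotonicity from the first step yields
$$\norm{\dot x_z(t)}^2\ge \paren{\dfrac{\tau_3}{t}}^{2\alpha\lambda}\norm{\dot x_z(\tau_3)}^2 \quad \text{for } t\in[\tau_3,T^\lambda(z)],$$
and, since $\tau_3<\tau_2$, I can apply \eqref{E:reverse} at $\tau_3$ and then use \eqref{eq:strong1} to obtain
$$\norm{\dot x_z(\tau_3)}^2\ge \dfrac{2\mu\,\tau_3^2\,(2H(\tau_3)-1)^2}{(\alpha+1)^2H(\tau_3)^2}\,(\phi(z)-\phi^*).$$
Substituting these two estimates into the energy bound and canceling the factor $\phi(z)-\phi^*>0$ (legitimate because $z\notin\argmin(\phi)$), all $z$-dependence disappears and I am left with
$$\int_{\tau_3}^{T^\lambda(z)} t^{-2\alpha\lambda}\,dt \le \dfrac{(\alpha+1)^2H(\tau_3)^2}{2\beta\mu^2\,\tau_3^{2\alpha\lambda+2}(2H(\tau_3)-1)^2}.$$
Since $\lambda\le\frac{1}{2\alpha}$ forces $2\alpha\lambda\le 1$, the function $t\mapsto t^{-2\alpha\lambda}$ is not integrable at infinity, so the only way to satisfy the above inequality is to have $T^\lambda(z)\le \tau^*$ for some finite $\tau^*=\tau^*(\alpha,\beta,L,\mu,\lambda)$ independent of $z$.

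The main obstacle is the clean cancellation in the final step: all $z$-dependence on both sides must reduce exactly to the factor $\phi(z)-\phi^*$. This is achieved through the chain $\norm{\dot x_z(\tau_3)}^2 \gtrsim \norm{\nabla\phi(z)}^2 \gtrsim \phi(z)-\phi^*$, which relies on the interplay between the reverse inequality \eqref{E:reverse} (valid up to $\tau_2$, and hence at $\tau_3$) and strong convexity \eqref{eq:strong1}. The restriction $\lambda\le\frac{1}{2\alpha}$ then plays a sharp role: it is exactly the threshold that makes $\int_{\tau_3}^{T}t^{-2\alpha\lambda}\,dt$ diverge with $T$, which is the mechanism preventing $T^\lambda(z)$ from being arbitrarily large.
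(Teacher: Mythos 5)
Your proof is correct, and it reaches the conclusion by a route that differs from the paper's in one substantive respect. Both arguments share the same skeleton: the observation that $t\mapsto t^{2\alpha\lambda}\norm{\dot x_z(t)}^2$ is nondecreasing before the restart (the paper writes this as $t^pg(t)\ge\tau^pg(\tau)$ with $p=2\alpha\lambda$), the use of \eqref{E:reverse} together with the P\L\ inequality \eqref{eq:strong1} to convert a speed lower bound into a $z$-independent quantity, and the final "divergent integral versus bounded right-hand side" mechanism, for which $2\alpha\lambda\le 1$ is exactly the threshold in both proofs. Where you diverge is in the choice of dissipation inequality: the paper integrates the function-value decrease $\frac{d}{dt}\phi(x_z(t))\le-\bigl[\frac{\alpha(1-\lambda)}{t}+\beta\mu\bigr]\norm{\dot x_z(t)}^2$ from \eqref{E:restarting_times_2} over $[\tau,T^\lambda(z)]$ and bounds the total drop by $\norm{\nabla\phi(z)}^2/2\mu$, whereas you integrate the Lyapunov energy $\phi(x_z(t))+\frac12\norm{\dot x_z(t)}^2$ from $t=0$ and retain only the Hessian-damping term $\beta\mu\norm{\dot x_z}^2$. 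The price of your simplification is that the term $\frac{\alpha(1-\lambda)}{t}\norm{\dot x_z}^2$ is discarded, so your divergent integral is $\beta\mu\int t^{-2\alpha\lambda}\,dt$ alone and the argument genuinely collapses when $\beta=0$ — even for $\lambda=0$ — while the paper's version keeps $\alpha(1-\lambda)\int t^{-p-1}\,dt$, which for $p=0$ yields a bound independent of $\beta$. Under the standing hypothesis $\beta>0$ of Theorem \ref{T:TheTheorem} this is immaterial, and what your approach buys is a cleaner single anchor point ($\tau_3$ from Corollary \ref{C:lower_bound}, which lies in $(0,\tau_2)$ so \eqref{E:reverse} applies there) and a unified treatment of the cases $p=0$, $p\in(0,1)$ and $p=1$ that the paper handles separately; one could even extract an explicit $\tau^*$ from your final inequality by inverting $\int_{\tau_3}^{T}t^{-2\alpha\lambda}\,dt$ in each of these regimes, exactly as the paper does.
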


\begin{proof} Write $g(t) = \norm{\dot{x}_z(t)}^2$ and $p=2\alpha\lambda\ge 0$. Fix $\tau \in (0,\tau_2) \cap \big(0,T^\lambda(z)\big)$ and $t \in \big(\tau,T^\lambda(z)\big)$. The definition of the restarting time implies that
\[tg'(t) +pg(t) \geq 0.\]
Integrating over $[\tau,t]$, we obtain
$$
t^{p}g(t) \geq \tau^{p}g(\tau). 
$$
Since $t<T^\lambda(z)$, the definition of the restarting time, together with \eqref{E:restarting_times_2}, yield
\begin{equation} \label{E:upper_bound_1}
    \dfrac{d}{dt}\phi(x_z(t)) \leq - \parenc{\dfrac{\alpha}{t}(1-\lambda) + \beta\mu}g(t)\le - \tau^{p}g(\tau) \parenc{\dfrac{\alpha(1-\lambda)}{t^{p+1}} + \dfrac{\beta\mu}{t^p}}.
\end{equation}
Integrating \eqref{E:upper_bound_1} over $[\tau,T^\lambda(z)]$, we deduce that
$$\phi\paren{x_z(T^\lambda(z))} - \phi(x_z(\tau)) \leq -\tau^pg(\tau)\parenc{\int_\tau^{T^\lambda(z)}\dfrac{\alpha(1-\lambda)}{t^{p+1}}dt + \int_\tau^{T^\lambda(z)}\dfrac{\beta\mu}{t^p}dt}.$$
As $t\mapsto\phi\big(x_z(t)\big)$ is nonincreasing on $[0,T^\lambda(z)]$, the P\L\ Inequality \eqref{eq:strong1} gives
\[2\mu (\phi(x_z(\tau)) - \phi\paren{x_z(T^\lambda(z))}\le 2\mu\big(\phi(z) - \phi^*)\le \norm{\nabla\phi(z)}^2.\]
Combining the last two inequalities, and using \eqref{E:reverse}, it follows that
\begin{equation}\label{eq:ineq_upperbound}
\int_\tau^{T^\lambda(z)}\dfrac{\alpha(1-\lambda)}{t^{p+1}}dt + \int_\tau^{T^\lambda(z)}\dfrac{\beta\mu}{t^p}dt\le \frac{\|\nabla \phi(z)\|^2}{2\mu\tau^pg(\tau)}\le \frac{(\alpha+1)^2H(\tau)^2}{2\mu\tau^{p+2}\big(2H(\tau)-1\big)^2}=:M(\tau).
\end{equation}
The quantity $M(\tau)$ is finite for every $\tau\in(0,\tau_2)$, and does not depend on $z$. If $p=0$ ($\lambda=0$), then
$$\alpha\ln\left(\frac{T^\lambda(z)}{\tau}\right)+\beta\mu\big(T^\lambda(z)-\tau\big) \le M(\tau),$$
Thus
$$T^\lambda(z)\le \min\left\{\tau \exp\left(\frac{M(\tau)}{\alpha}\right),\tau+\frac{M(\tau)}{\beta\mu}\right\}.$$
If $p\in(0,1)$, then
$$\frac{\alpha(1-\lambda)}{p}\left[\frac{1}{\tau^p}-\frac{1}{T^\lambda(z)^p}\right]+\frac{\beta\mu}{1-p}\big[T^\lambda(z)^{1-p}-\tau^{1-p}\big]\le M(\tau),$$
so
$$T^\lambda(z)\le \left[\frac{(1-p)M(\tau)}{\beta\mu}\right]^{\frac{1}{1-p}}.$$
If $p=1$ ($\lambda=\frac{1}{2\alpha}\le 1$), then
$$\left(\alpha-\frac{1}{2}\right)\left[\frac{1}{\tau}-\frac{1}{T^\lambda(z)}\right]+\beta\mu\ln\left(\frac{T^\lambda(z)}{\tau}\right) \le M(\tau),$$
whence
$$T^\lambda(z)\le \tau \exp\left(\frac{M(\tau)}{\beta\mu}\right).$$
Since the expressions on the right-hand side do not depend on $z$, the result follows.
\end{proof}

\begin{remark}
If $p=2\alpha\lambda>1$, the procedure employed in the proof above does not yield an upper bound for $T(z)$. Whether or not it is possible to obtain such a bound by a different method is an open question.
\end{remark}

\section{Uniform function value decrease and conclusion} \label{sec:main_theo}

In order to show that the $\lambda$-extended speed restart is proper, it remains to establish property iii) in the definition given in \ref{SS:schemes}. To this end, set
\[\Psi(t) = \parenc{2 - \dfrac{1}{H(t)}}^2.\]
We have the following:

\begin{proposition}\label{p:values_decrease} 
Let $x_z$ be the lazy solution of \eqref{eq:din_avd} starting from $z \notin \argmin(\phi)$. For each $\tau \in (0,\tau_2)\cap(0,T^\lambda(z)]$ and each $t \in \parenc{\tau,T^\lambda(z)}$, we have
\[\phi(x_z(t)) - \phi^* \leq  \left(1-\left[\frac{\alpha(1-\lambda)}{2}+\frac{\beta\mu\tau}{3}\right]\dfrac{2\mu\tau^2\Psi(\tau)}{(\alpha+1)^2}\right)(\phi(z) - \phi^*).\]
\end{proposition}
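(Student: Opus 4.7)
The form of the target inequality is suggestive: the coefficient $\frac{\alpha(1-\lambda)}{2}+\frac{\beta\mu\tau}{3}$ with denominators $2$ and $3$ is exactly what one obtains by integrating $s$ and $s^2$ over $[0,\tau]$. This, together with the factor $\frac{2\mu\tau^2\Psi(\tau)}{(\alpha+1)^2}$, strongly hints that the correct path is to integrate the decay estimate \eqref{E:restarting_times_2} over $[0,\tau]$ (not over $[\tau,t]$) and to bound $\|\dot x_z(s)\|^2$ from below using the reverse estimate \eqref{E:reverse} of Proposition \ref{P:I_and_x}.

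Concretely, the plan is as follows. First, since $t < T^\lambda(z)$ implies $\varphi_{z,\lambda}(s)\ge 0$ on $[0,t]$, inequality \eqref{E:restarting_times_2} gives $\frac{d}{ds}\phi(x_z(s))\le -\left[\frac{\alpha(1-\lambda)}{s}+\beta\mu\right]\|\dot x_z(s)\|^2$, so $s\mapsto\phi(x_z(s))$ is nonincreasing on $[0,T^\lambda(z)]$ and, in particular, $\phi(x_z(t))\le\phi(x_z(\tau))$. Integrating the differential inequality from $0$ to $\tau$ yields
\[
\phi(x_z(t))-\phi(z)\;\le\;\phi(x_z(\tau))-\phi(z)\;\le\;-\int_0^\tau\left[\frac{\alpha(1-\lambda)}{s}+\beta\mu\right]\|\dot x_z(s)\|^2\,ds.
\]
Second, the reverse estimate \eqref{E:reverse} says that for $s\in(0,\tau_2)$,
\[
\|\dot x_z(s)\|^2\;\ge\;\frac{s^2\Psi(s)}{(\alpha+1)^2}\|\nabla\phi(z)\|^2,
\]
because $\Psi(s)=(2-1/H(s))^2=(2H(s)-1)^2/H(s)^2$.

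The step that requires a short verification is the monotonicity of $\Psi$ on $(0,\tau_2)$: since $H$ is strictly decreasing on $(0,\tau_1)$ with $H(0)=1$ and $H(\tau_2)=1/2$, the map $s\mapsto 2-1/H(s)$ is strictly decreasing and positive on $(0,\tau_2)$, so $\Psi$ is decreasing there as well. Consequently $\Psi(s)\ge\Psi(\tau)$ for every $s\in[0,\tau]$, and the integral above is bounded below by
\[
\frac{\Psi(\tau)\|\nabla\phi(z)\|^2}{(\alpha+1)^2}\int_0^\tau\left[\alpha(1-\lambda)s+\beta\mu s^2\right]ds \;=\;\frac{\Psi(\tau)\tau^2\|\nabla\phi(z)\|^2}{(\alpha+1)^2}\left[\frac{\alpha(1-\lambda)}{2}+\frac{\beta\mu\tau}{3}\right].
\]

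Finally, the Polyak--\L{}ojasiewicz inequality \eqref{eq:strong1}, $\|\nabla\phi(z)\|^2\ge 2\mu(\phi(z)-\phi^*)$, converts this bound into the desired form. Adding $\phi(z)-\phi^*$ to both sides produces exactly the contraction factor stated in the proposition. The main conceptual point of the argument, and the only non-routine step, is the choice to integrate over $[0,\tau]$ rather than $[\tau,t]$: doing so lets us invoke the universal lower bound \eqref{E:reverse} (valid only on $(0,\tau_2)$) with $s$ ranging up to $\tau<\tau_2$, and it simultaneously removes any dependence of the right-hand side on $t$.
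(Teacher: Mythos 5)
Your proof is correct and takes essentially the same route as the paper's: you integrate \eqref{E:restarting_times_2} over $[0,\tau]$, lower-bound $\|\dot x_z(s)\|^2$ via \eqref{E:reverse} (which is exactly the factor $\frac{s^2\Psi(s)}{(\alpha+1)^2}\|\nabla\phi(z)\|^2$), use the monotonicity of $\Psi$ on $(0,\tau_2)$ to pull $\Psi(\tau)$ out of the integral, and conclude with the P\L\ inequality \eqref{eq:strong1} together with the monotonicity of $s\mapsto\phi(x_z(s))$ on $[0,T^\lambda(z)]$. This matches the paper's argument step for step.
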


\begin{proof} Let $\tau \in (0,\tau_2) \cap (0,T^\lambda(z))$, and take $s \in (0,\tau)$. Using \eqref{E:restarting_times_2} and then \eqref{E:reverse}, we deduce that
\begin{eqnarray*}
    \dfrac{d}{ds}\phi(x_z(s)) 
& \le &-\left[\dfrac{\alpha(1-\lambda)}{s} + \beta\mu\right]\norm{\dot{x}_z(s)}^2 \\
& \le & -\left[\alpha(1-\lambda)s + \beta\mu s^2\right]
\Psi(s)\dfrac{\norm{\nabla\phi(z)}^2}{(\alpha+1)^2}.
\end{eqnarray*}
Since $H$ is nonincreasing and larger than 1/2, $\Psi$ is nonincreasing. Integration on $(0,\tau)$ yields 
\begin{eqnarray*}
    \phi(x_z(\tau)) - \phi(z) 
    & \le & -\dfrac{\norm{\nabla\phi(z)}^2}{(\alpha+1)^2}\int_0^\tau\left[\alpha(1-\lambda)s + \beta\mu s^2\right]\Psi(s)\,ds \\
    & \le & -\Psi(\tau)\dfrac{\norm{\nabla\phi(z)}^2}{(\alpha+1)^2}\int_0^\tau\left[\alpha(1-\lambda)s + \beta\mu s^2\right]\,ds \\
    & = & -\Psi(\tau)\dfrac{\norm{\nabla\phi(z)}^2}{(\alpha+1)^2}\left[\frac{\alpha(1-\lambda)\tau^2}{2}+\frac{\beta\mu\tau^3}{3}\right].
\end{eqnarray*}
Using the P\L\ Inequality \eqref{eq:strong1}, and the fact that $\phi(x(t)) \leq \phi(x(\tau))$, we deduce that
\begin{eqnarray*}
    \phi(x_z(t)) - \phi^* 
    & \le & \Big(\phi\big(x_z(\tau)\big) - \phi(z)\Big)  +\big(\phi(z)-\phi^*\big) \\
    & \le & \big(\phi(z)-\phi^*\big) -\Psi(\tau)\dfrac{\norm{\nabla\phi(z)}^2}{(\alpha+1)^2}\left[\frac{\alpha(1-\lambda)\tau^2}{2}+\frac{\beta\mu\tau^3}{3}\right]\\
    & \le & \big(\phi(z)-\phi^*\big) -\dfrac{2\mu\tau^2\Psi(\tau)\big(\phi(z)-\phi^*\big)}{(\alpha+1)^2}\left[\frac{\alpha(1-\lambda)}{2}+\frac{\beta\mu\tau}{3}\right]\\
    & \le &  \left(1-\left[\frac{\alpha(1-\lambda)}{2}+\frac{\beta\mu\tau}{3}\right]\dfrac{2\mu\tau^2\Psi(\tau)}{(\alpha+1)^2}\right)\big(\phi(z)-\phi^*\big),
\end{eqnarray*}
as stated. 
\end{proof}

The results developed above allow us to establish the following:

\begin{proposition} \label{P:conclusion}
Let $\alpha,\beta>0$. For every $\lambda\in[0,\frac{1}{2\alpha}]$, the $\lambda$-extended restarting scheme is proper.    
\end{proposition}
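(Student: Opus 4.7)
The plan is to verify conditions (i)--(iii) of the definition of a proper scheme from Section \ref{SS:schemes}, by assembling the results already established. Condition (ii) is immediate from \eqref{E:restarting_times_1}: on $[0, T^\lambda(z))$ we have $\varphi_{z,\lambda}(t) > 0$ by the very definition of $T^\lambda(z)$, and the identity in \eqref{E:restarting_times_1} together with convexity of $\phi$ yields $\frac{d}{dt}\phi(x_z(t)) \leq -\varphi_{z,\lambda}(t) < 0$. Continuity of $\phi\circ x_z$ extends the monotonicity to the closed interval $[0, T^\lambda(z)]$.

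For condition (i), I set $\tau_* = \tau_3 > 0$: Corollary \ref{C:lower_bound} gives $\tau_3 \leq T^\lambda(z)$ uniformly for $z \notin \argmin(\phi)$. The upper bound $\tau^* = \bm T^* < \infty$ is provided by Proposition \ref{p:upperbound_existence}; note that the hypothesis $\lambda \leq \frac{1}{2\alpha}$ enters exactly there, ensuring that the exponent $p = 2\alpha\lambda$ is at most $1$, so that each of the three cases analysed in that proof yields a $z$-independent upper bound on $T^\lambda(z)$.

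For condition (iii), I apply Proposition \ref{p:values_decrease} with the uniform choice $\tau = \tau_3$, which is legitimate since $\tau_3 \in (0, \tau_2)$ and $\tau_3 \leq T^\lambda(z)$. For every $t \in (\tau_3, T^\lambda(z))$, this yields
\[\phi(x_z(t)) - \phi^* \leq Q\bigl(\phi(z) - \phi^*\bigr), \qquad Q := 1 - \left[\frac{\alpha(1-\lambda)}{2} + \frac{\beta\mu\tau_3}{3}\right]\frac{2\mu\tau_3^2\,\Psi(\tau_3)}{(\alpha+1)^2}.\]
Passing to the limit $t \to T^\lambda(z)^-$ and invoking continuity of $\phi\circ x_z$ extends the bound to $t = T^\lambda(z)$. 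The constant $Q$ depends only on $\alpha, \beta, \lambda, \mu, L$ (through $\tau_3$ and $\Psi$), not on $z$, which is precisely the point of anchoring at the uniform lower bound $\tau_3$ from Corollary \ref{C:lower_bound}. Finally, $Q < 1$ because $\Psi(\tau_3) = (2 - 1/H(\tau_3))^2 > 0$ (as $H(\tau_3) > H(\tau_2) = 1/2$) and the bracket is strictly positive (since $\beta\mu\tau_3 > 0$, even in the edge case $\lambda = 1$); nonnegativity of $Q$ is forced by the bound itself, since $\phi \geq \phi^*$ and $\phi(z) - \phi^* > 0$. No individual step poses a real obstacle once the hard analysis of Sections \ref{sec:lower_bound}--\ref{sec:main_theo} is in hand; the only point requiring care is to ensure that the anchor time in Proposition \ref{p:values_decrease} is chosen uniformly in $z$, and the universal value $\tau_3$ serves exactly this purpose.
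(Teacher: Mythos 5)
Your proof is correct and follows essentially the same route as the paper's: verify (i)--(iii) by citing Corollary \ref{C:lower_bound}, Proposition \ref{p:upperbound_existence}, and Proposition \ref{p:values_decrease}. The only (harmless, arguably cleaner) difference is that you anchor the decrease factor $Q$ at $\tau_3$ rather than at $\bm T_*$ as the paper does, which has the small advantage of guaranteeing the anchor lies in $(0,\tau_2)$ as required by Proposition \ref{p:values_decrease}.
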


\begin{proof}
    It suffices to set $\tau_*=\bm T_*$ (Corollary \ref{C:lower_bound}), $\tau^*=\bm T^*$ (Proposition \ref{p:upperbound_existence}), and $$Q=1-\left[\frac{\alpha(1-\lambda)}{2}+\frac{\beta\mu\bm T_*}{3}\right]\dfrac{2\mu(\bm T_*)^2\Psi(\bm T_*)}{(\alpha+1)^2}$$ 
    (Proposition \ref{p:values_decrease}).
\end{proof}
\section{Numerical Illustrations}\label{sec:numerical}
In this section, we provide numerical evidence of the improvement achieved by the extended speed restart. Inspired by our results in continuous time, we propose a new restarting scheme for accelerated gradient methods and carry out illustrative tests on its performance.

\subsection{Continuous case}
Let us consider a quadratic function 
 $\phi:\R^n \to \R$ of the form 
 \begin{equation}\label{eq:func_cont}
     \phi(x) = \frac{1}{2}x^T\mathcal{A}x + bx,
 \end{equation}
where $\mathcal{A}$ is a real positive-definite matrix of size $n\times n$, and $b \in \R^n$. By using its gradient and Hessian in \eqref{eq:din_avd}, we obtain a $n$-dimensional system of differential equations given by
\[\ddot{x}(t) + \dfrac{\alpha}{t}\dot{x}(t) + \beta \mathcal{A}\dot{x}(t)  + \mathcal{A}x(t) + b =0.\]
We set $n=40$, we generate $\mathcal{A}$ and $b$ randomly (taking into consideration that $\mathcal{A}$ must be positive), and we solve \eqref{eq:din_avd} using the\texttt{
Python} tool \texttt{solve\_ivp} from the package \texttt{scipy}. We use $\alpha=3$, $\beta=1$, and we solve on initial time $t_0=1$ with initial condition $x_0 \in \R^n$ defined as $1$ in all of its entries and zero initial velocity. Figure \ref{fig:continuous_lambdas} presents the function values along the trajectories minus the minimum value of the function, which can be easily computed, and the continuous restart routine for different choices of $\lambda$. The case $\lambda=0$, depicted as a red line, is the speed restart studied in \cite{maulen2023speed}. The Figure shows a faster linear convergence for larger values of $\lambda$. Table \ref{tab:coefs_cont} shows the coefficients of the approximation $\phi\big(x(t)\big) - \phi^*\sim Ae^{-Bt}$ for the restarted trajectories. 

\begin{figure}[h]
    \centering    \includegraphics[width=0.75\linewidth]{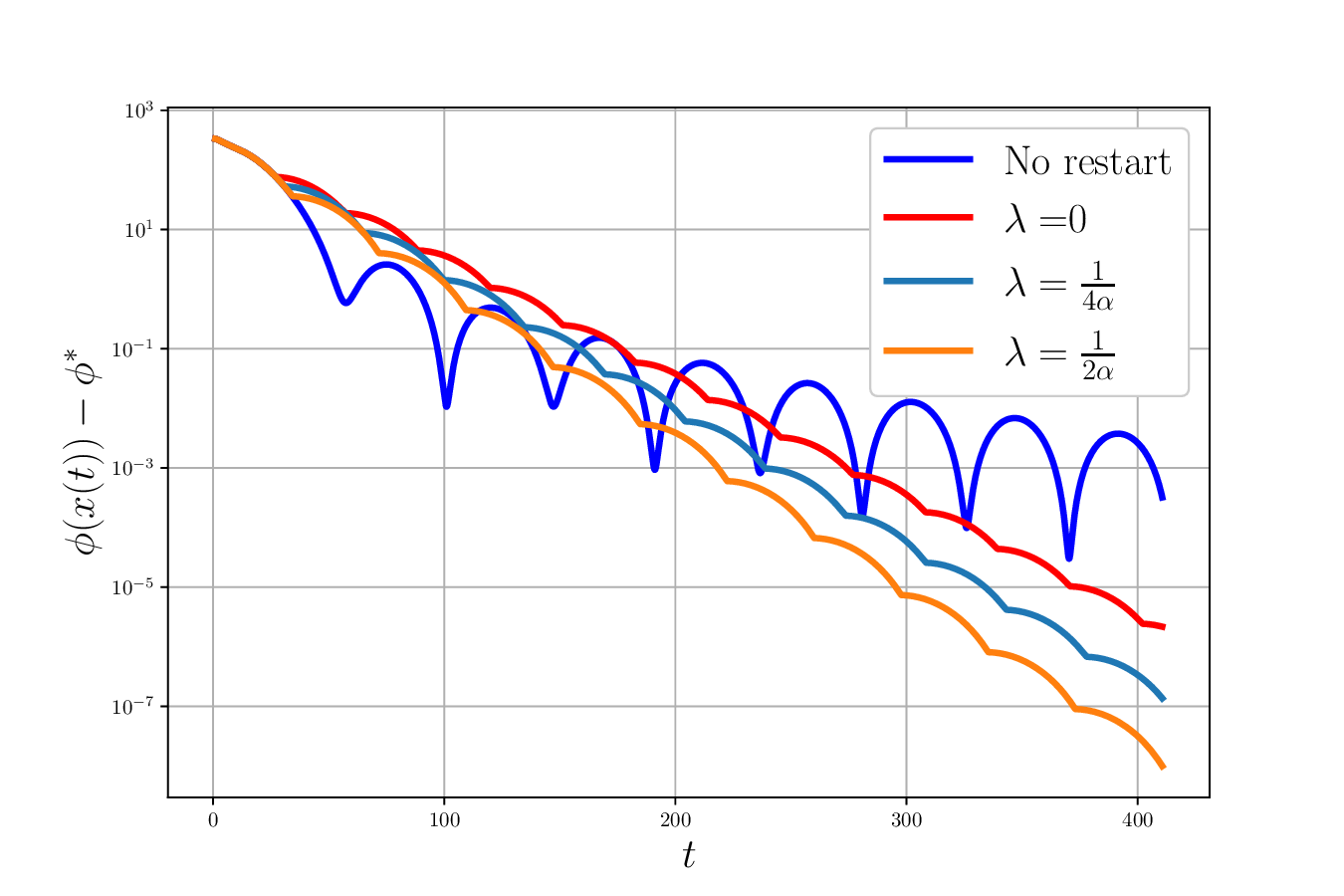}
    \caption{Values along the trajectory, for the solutions of \eqref{eq:din_avd} with $\phi$ a quadratic function as in \eqref{eq:func_cont}, for different choices of $\lambda$.}
    \label{fig:continuous_lambdas}
\end{figure}

\begin{table}[ht]
    \centering
    \begin{tabular}{cccc}
    \toprule
    $\lambda$ & 0& $1/4\alpha$&$1/2\alpha$ \\
    \midrule
       $A$ & 3.3093e+02  & 3.6454e+02 &  3.2467e+02 \\
        $B$ & 4.5618e-02 & 5.2021e-02 & 5.6794e-02 \\
    \bottomrule
    \end{tabular}
    \caption{Coefficients of the approximation $\phi\big(x(t)\big) - \phi^*\sim Ae^{-Bt}$, for the function values along the solutions of \eqref{eq:din_avd} with $\phi$ a quadratic function as in \eqref{eq:func_cont}. }
    \label{tab:coefs_cont}
\end{table}

\subsection{A proposed heuristic for a discrete criterion}\label{sec:discrete}

Finite time discretizations of equation \eqref{eq:din_avd} leads to the class of first-order algorithms including a gradient correction term (see \cite{attouch2020first}), which are designed to build minimizing sequences for the function. In this section, we propose the following criterion obtained by a discretization of the restarting criterion in \eqref{E:ESRestart}:
\begin{equation}
    \norm{\Delta_{k+1}}^2 \leq \paren{1 - \dfrac{2\alpha\lambda}{k}}\norm{\Delta_k}^2,
\end{equation}
where $\Delta_k = x_k - x_{k-1}$ and $\lambda \in [0,\frac{1}{2\alpha}]$. For a stepsize $h>0$, we consider a discretization of \eqref{eq:din_avd} leading to 
\begin{equation}\label{eq:algorithm}
    \left\lbrace\begin{array}{rl}
  y_k   & = x_k + \alpha_k(x_k-x_{k-1}) - \beta h(\nabla \phi(x_k) - \nabla \phi(x_{k-1})) \\
   x_{k+1}  & = y_k - h^2 \nabla \phi(y_k),
\end{array}\right.
\end{equation}
with $\alpha_k =\frac{k}{k+\alpha} = 1 - \frac{\alpha}{k+\alpha}$, which is an usual choice for Nesterov's acceleration (see, for instance, \cite{wang2025acceleratedgradientmethodsinertial}). This method can be interpreted as Nesterov's accelerated method  including a \textit{gradient correction} term when $\beta>0$. This scheme is closely related to the family of algorithms studied in \cite{attouch2020first}, where the parameters are required to satisfy $\alpha \geq 3$, $h^2 L \leq 1$, and $\beta \in (0,2h)$. In particular, these conditions enforce $\beta$ to remain small in practice, so that the resulting sequences exhibit a comparable behavior to that of Nesterov’s accelerated gradient method (NAG). The latter converges at a linear \cite{li2024linear,wang2025fast}---but not accelerated---rate. As we shall see below, the restarting schemes considerably improve these algorithms' performance.

Methods including restarting routines usually include the use of a $k_{\min}$ parameter which prevents the algorithm on performing the first restart too soon (see \cite{SBC2016,maulen2023speed}). This is not necessary with our proposed method. 

\begin{algorithm}[h]
\SetAlgoLined
Choose $x_0=x_{-1} \in \R^n$, $N \in \N$, $\alpha\geq 1$, $\beta>0$, $h>0$ and $\lambda \in \paren{0,\frac{1}{2\alpha}}$. Set $j=1$.\\
 \For{$k=1 \ldots N$}{
  \text{Compute} $y_k= x_k + \left(1-\frac{\alpha}{j+\alpha}\right)(x_k-x_{k-1}) - \beta h(\nabla \phi(x_k) - \nabla \phi(x_{k-1}))$, \\
  \text{and then} $x_{k+1} = y_k - h^2 \nabla \phi(y_k)$. \\
  \uIf{$\norm{x_{k+1}-x_k}^2 < \paren{1 - \dfrac{2\alpha\lambda}{j}}\norm{x_k - x_{k-1}}^2$}{$j=1$;}
  \uElse{$j=j+1$.}
 }
\Return $x_N$.
 \caption{Inertial Gradient Algorithm with Hessian Damping (IGAHD) - Extended Speed Restart Version}
  \label{algorithm:restart}
\end{algorithm}


\subsubsection{Illustrative examples}
The performance of Algorithm \ref{algorithm:restart} is tested for different choices of $\phi$, and the linear convergence becomes apparent by plotting $\phi(x_k)-\phi^*$ in logarithmic scale. In each case, the blue line corresponds to the non restarted algorithm, and the red line corresponds to the case $\lambda=0$, for comparison with \cite{maulen2023speed}. The linear convergence exhibited by the speed restart scheme is improved by considering the extended criteria with $\lambda > 0$. Then, we perform for each case an approximation of the function values decay as $\phi\big(x_k\big) - \phi^*\sim Ae^{-Bk}$ via linear regression. 
\begin{itemize}
    \item \textbf{An ill-posed problem:} We consider $\phi$ as in Example \ref{eq:fun_example} with $\rho =10$. We set $\alpha=3$, $\beta=h=1/\sqrt{L}$.  We test Algorithm \ref{algorithm:restart} with $\lambda \in \set{0,\frac{1}{4\alpha},\frac{1}{2\alpha}}$. The plot of $\phi(x_k) - \phi^*$ is presented in Figure \ref{fig:lam_diff_qua} and the coefficients of the approximation in Table \ref{tab:coefs_discrete}. 
    \item \textbf{Quadratic function:} Consider the function 
\[\phi(x) = \frac{1}{2}x^T\mathcal{A}x + bx,\]
where $\mathcal{A}$ is a positive definite symmetric $500\times500$ matrix, generated with eigenvalues in (0,1) and $b \in \R^{500}$ as a randomly generated vector with i.i.d. Gaussian entries with mean 0 and variance 1. 
We set $\alpha=3$ and $\beta=h=1/\sqrt{L}$, and the initial point $x_{0}=x_{1}$ is generated randomly. Figure \ref{fig:lam_diff_matr} presents the evolution of $\phi(x_{k})-\phi^{*}$ along the iterations and the coefficients of the approximation in Table \ref{tab:coefs_discrete}. 
\item \textbf{Log-sum-exp:} We consider the following function which is convex but \textit{not} strongly convex
\[\phi(x)=\rho \operatorname{log}\left[\sum_{i=1}^{m}\operatorname{exp}((a_{i}^{T}x-b_{i})/\rho)\right].\]
Here, we set the smoothness parameter $\rho=10$, $n=50$, $m=20$ and  $a_i \in \R^n$ are randomly generated vectors with i.i.d. standard Gaussian entries, for $i=1,\ldots,m$, and $b=(b_{i}) \in \R^n$ has i.i.d. Gaussian entries with mean 0 and variance 1.  We set $\alpha=3$, $\beta=h=1/\sqrt{L}$, and the initial point $x_{0}=x_{1}$ is generated randomly as well. Figure \ref{fig:lam_diff_log} shows the evolution of $\phi(x_{k})-\phi^{*}$ along the iterations and the coefficients of the approximation are presented in Table \ref{tab:coefs_discrete}.
\end{itemize}

\begin{figure}[htbp]
\centering
\subfigure[ $\phi(x) = \frac{1}{2}(x_1^2+\rho x_2^2+\rho^2 x_3^2)$.]{\includegraphics[width=0.32\textwidth]{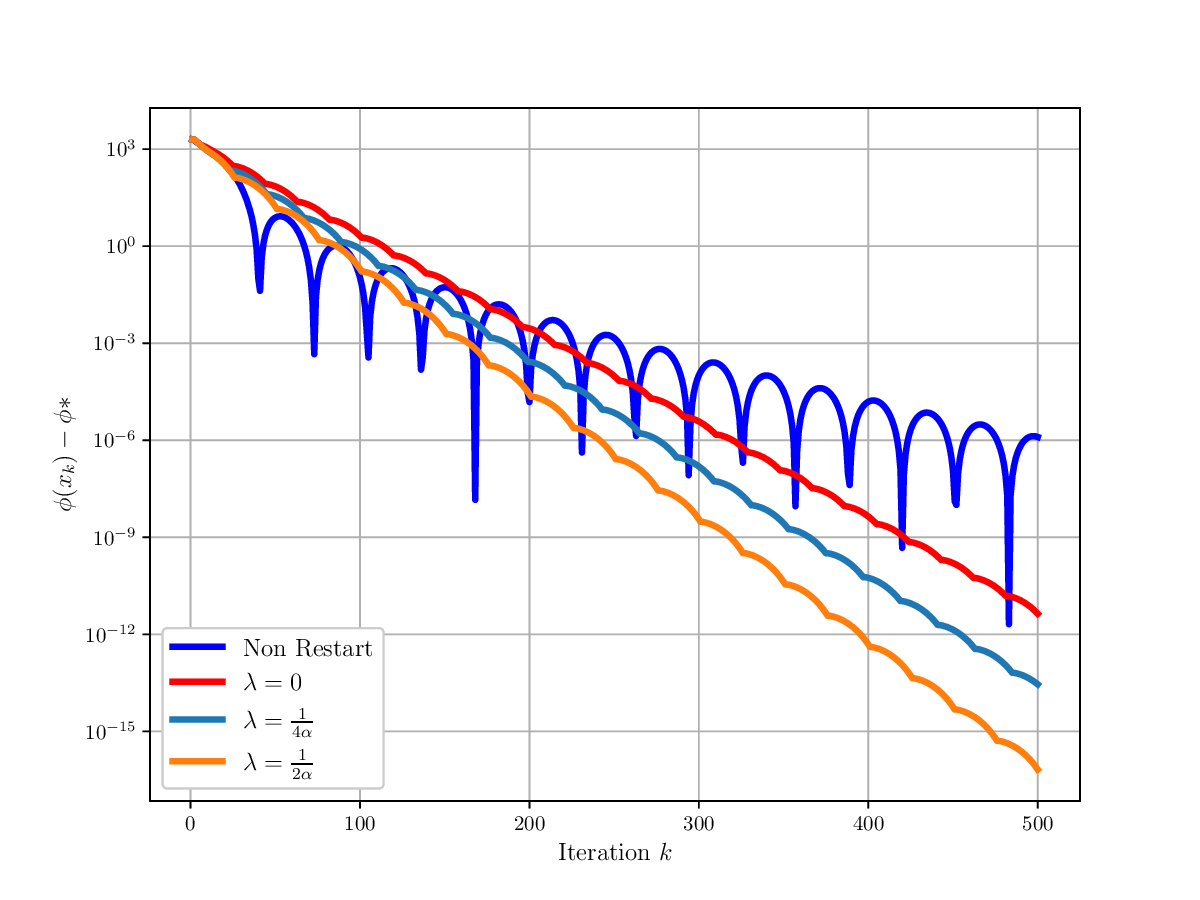}\label{fig:lam_diff_qua}}
	\subfigure[$\phi(x) = \frac{1}{2}x^T\mathcal{A}x + bx$.]{\includegraphics[width=0.32\textwidth]{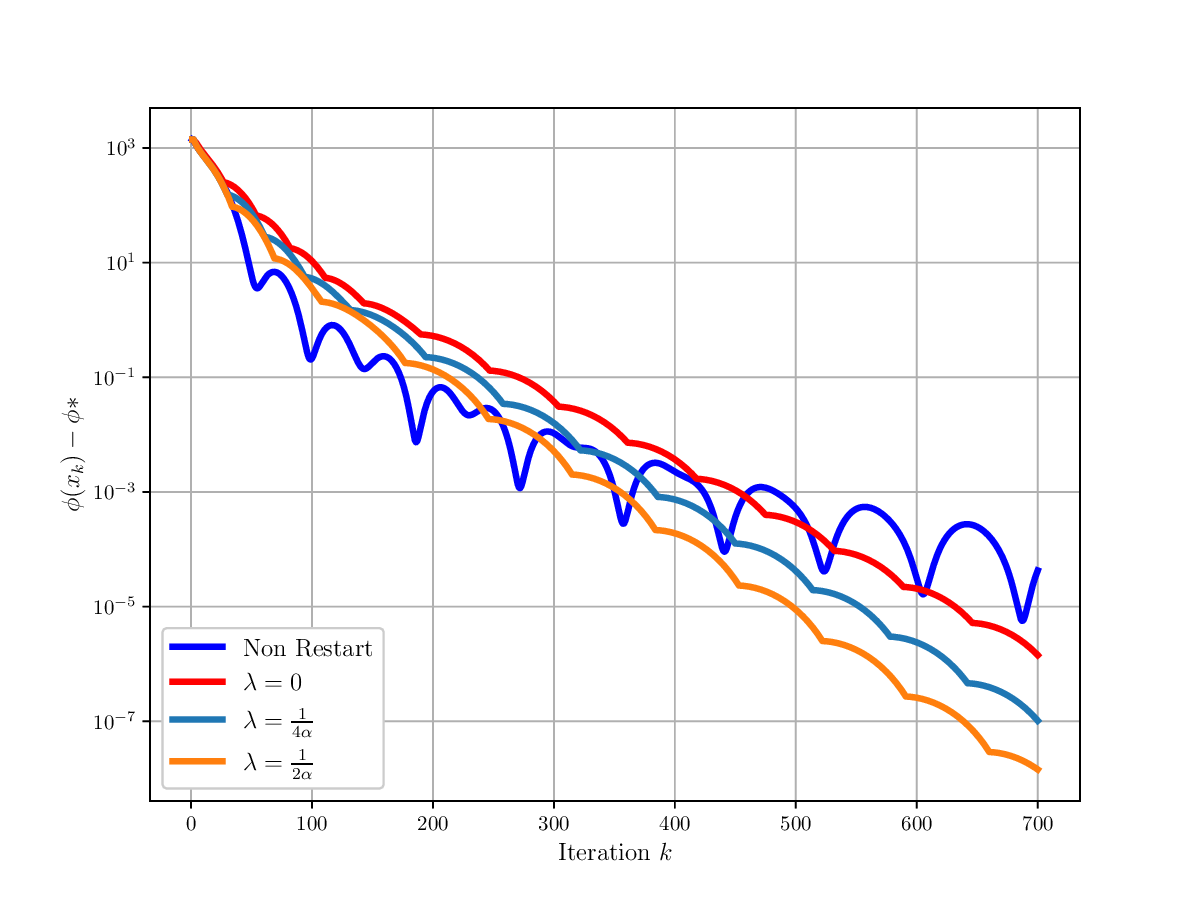}\label{fig:lam_diff_matr}}
	\subfigure[$\phi(x)=\rho \ln(\sum\exp(\frac{a_{i}^Tx-b_{i}}{\rho}))$.]{\includegraphics[width=0.32\textwidth]{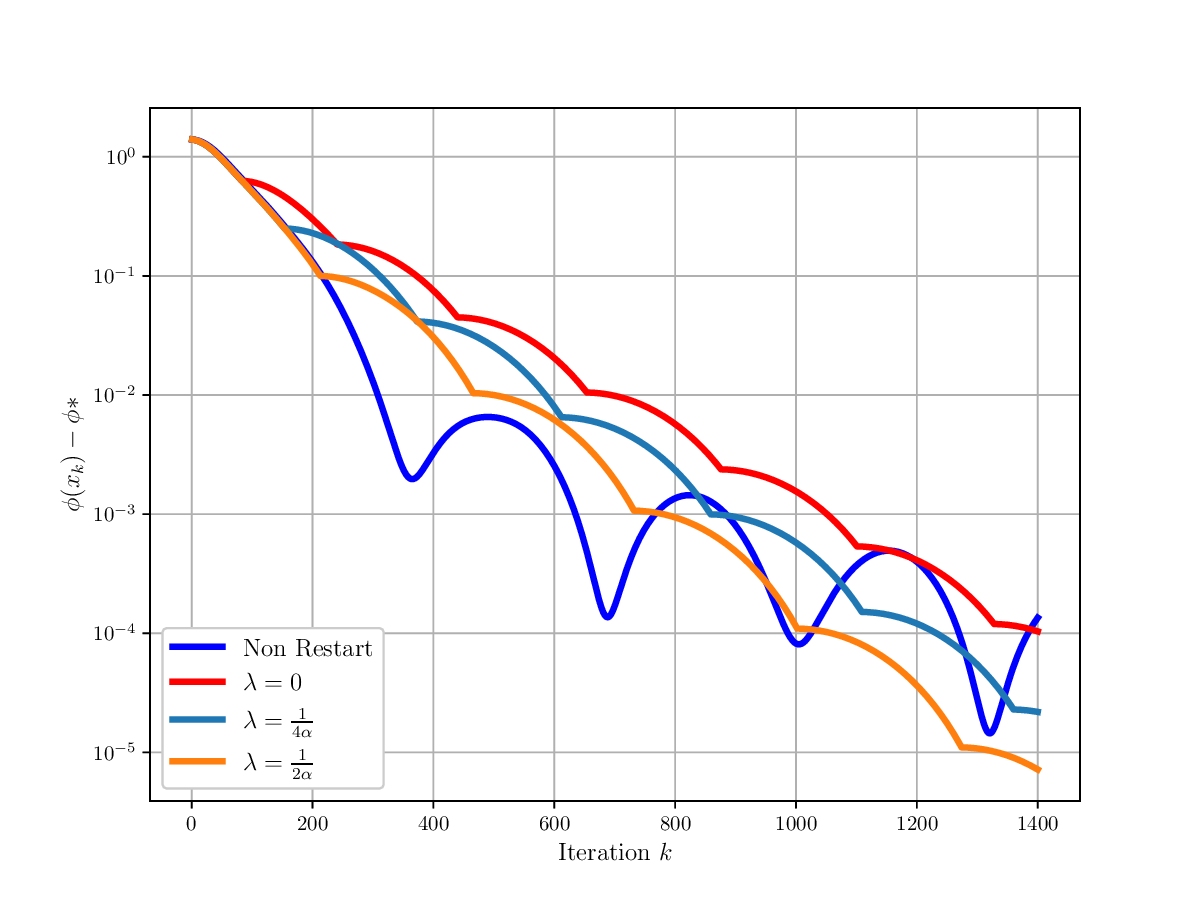}\label{fig:lam_diff_log}}
	\caption{Values of $\phi(x_{k})-\phi^{*}$ along the iterations for non restart, speed restart scheme ($\lambda=0$) and extended restart scheme with different choices of $\lambda$.}
\label{fig:discrete time}
\end{figure}

\begin{table}[htbp]
    \centering
    \begin{tabular}{ccccc}
    \toprule
    \multirow{2}{*}{$\phi(x)$}& & \multicolumn{3}{c}{$\lambda$} \\
    \cmidrule(lr){3-5}
    & & 0& $1/4\alpha$&$1/2\alpha$ \\
    \midrule
     \multirow{2}{*}{$\frac{1}{2}\paren{x_1^2+10x_2^2+100x_3^2}$}  &$A$ & 1.949e+02  & 1.721e+02 &  1.921e+02 \\
      &$B$  & 6.711e-02 & 7.746e-02 & 8.911e-02 \\
      \midrule
      \multirow{2}{*}{$\frac{1}{2}x^T\mathcal{A}x + bx$}&$A$ & 1.873e+02  & 1.708e+02 &  1.582e+02 \\
       & $B$ & 2.696e-02 & 3.056e-02 & 3.374e-02 \\
       \midrule
      \multirow[c]{2}{*}{$\rho \log\left(\ds\sum_{i=1}^{m}\exp((a_{i}^Tx-b_{i})/\rho)\right)$} &$A$ & 1.186 & 1.080 &  9.831e-01 \\ [2mm]
       &$B$ & 6.771e-03 & 7.714e-03 & 8.660e-03 \\[2mm]
    \bottomrule
    \end{tabular}
    \caption{Coefficients of the approximation $\phi\big(x_k\big) - \phi^*\sim Ae^{-Bk}$, for the different choices of $\phi$ presented in Section \ref{sec:discrete} over the realizations of Algorithm \ref{algorithm:restart}.}
    \label{tab:coefs_discrete}
\end{table}

Despite the lack of a theoretical convergence guarantee, restarting schemes based on function values show remarkable numerical performance \cite{o2015adaptive}. We propose to use the first function value restart point as a warm start. The evolution of the algorithm with and without the warm start is displayed in Figure \ref{fig:discrete_warm} for $\lambda=\frac{1}{2\alpha}$. The coefficients of the approximation $\phi\big(x_k\big) - \phi^*\sim Ae^{-Bk}$, for different values of $\lambda$, are presented in Table \ref{tab:coefs_discrete_warm}.

\begin{figure}[htbp]
\centering
\subfigure[$\phi(x) = \frac{1}{2}(x_1^2+\rho x_2^2+\rho^2 x_3^2)$.]{\includegraphics[width=0.32\textwidth]{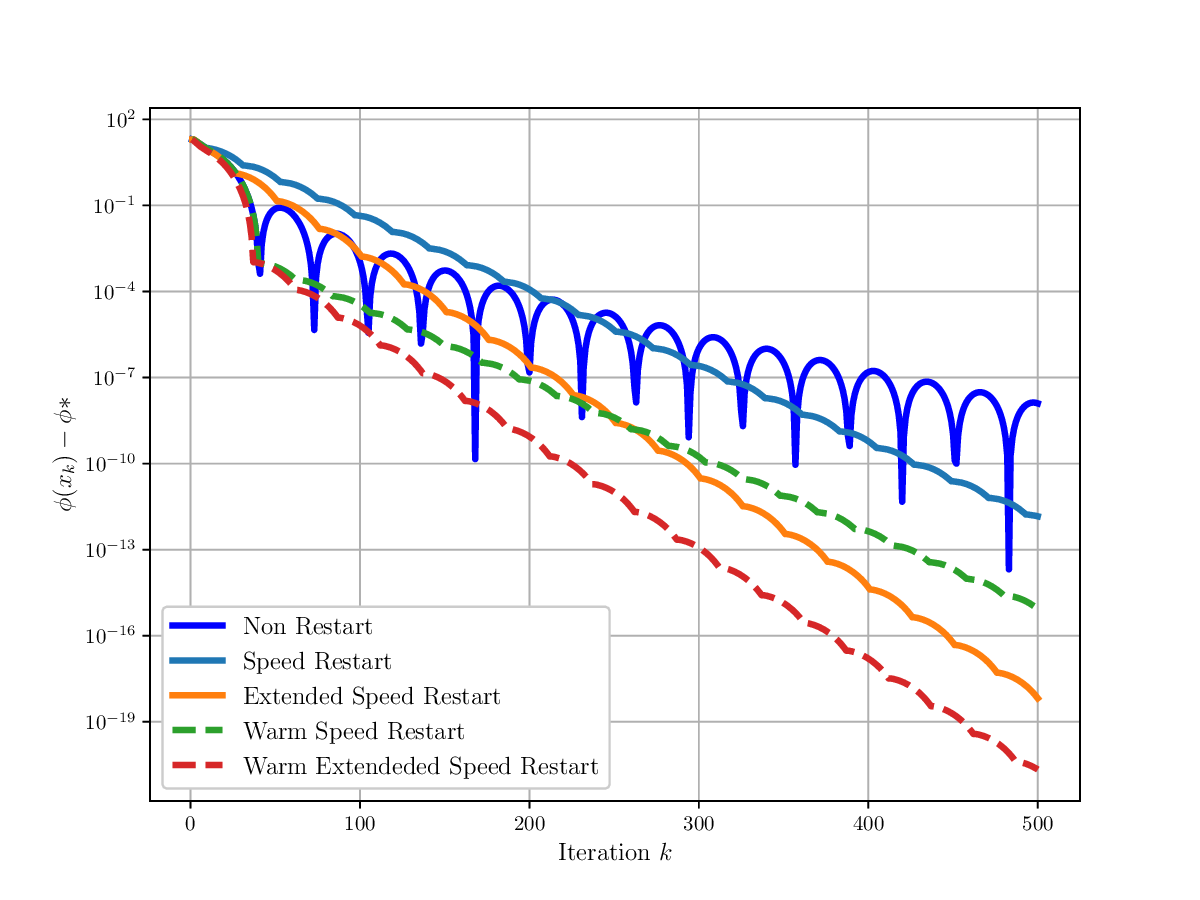}\label{fig:warm_quad}}
	\subfigure[$\phi(x) = \frac{1}{2}x^T\mathcal{A}x + bx$.]{\includegraphics[width=0.32\textwidth]{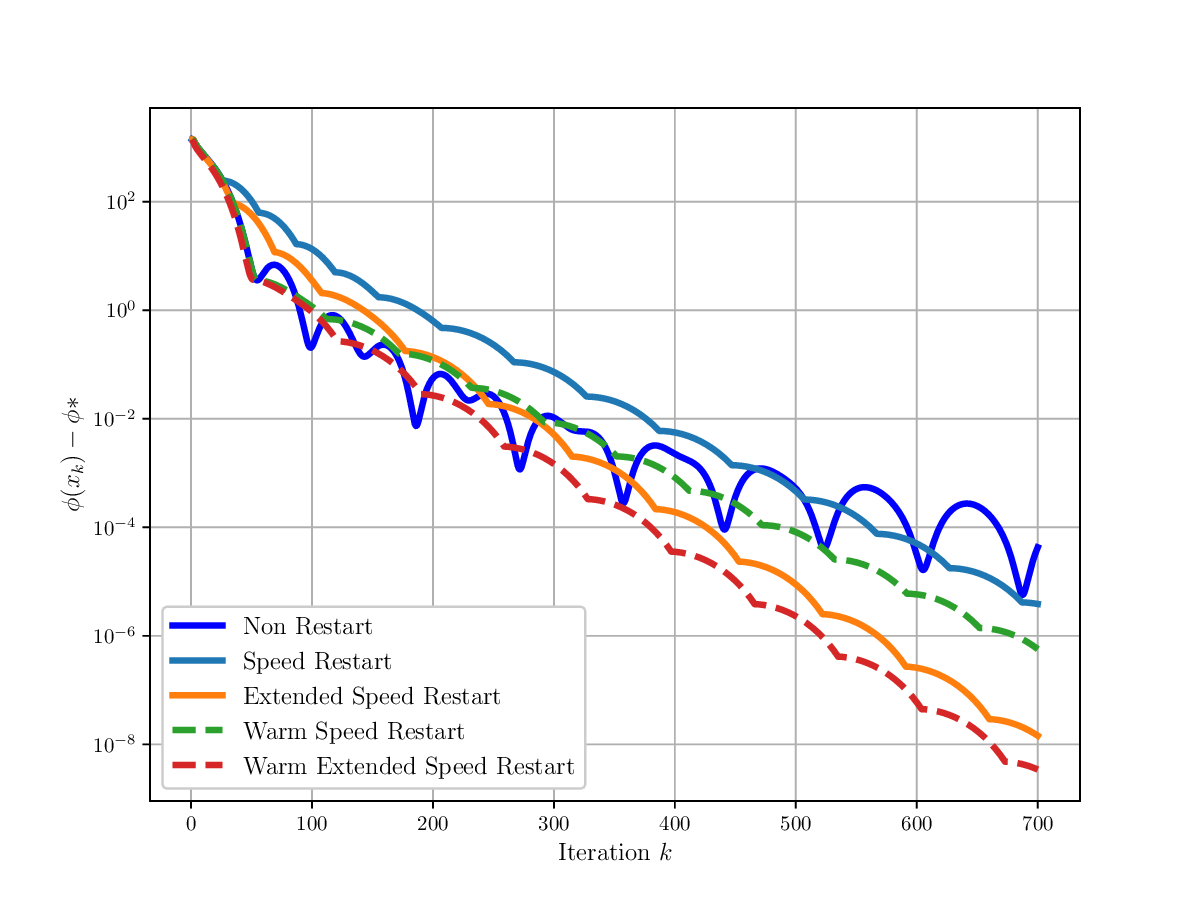}\label{fig:warm_matr}}
	\subfigure[$\phi(x)=\rho \ln(\sum\exp(\frac{a_{i}^Tx-b_{i}}{\rho}))$.]{\includegraphics[width=0.32\textwidth]{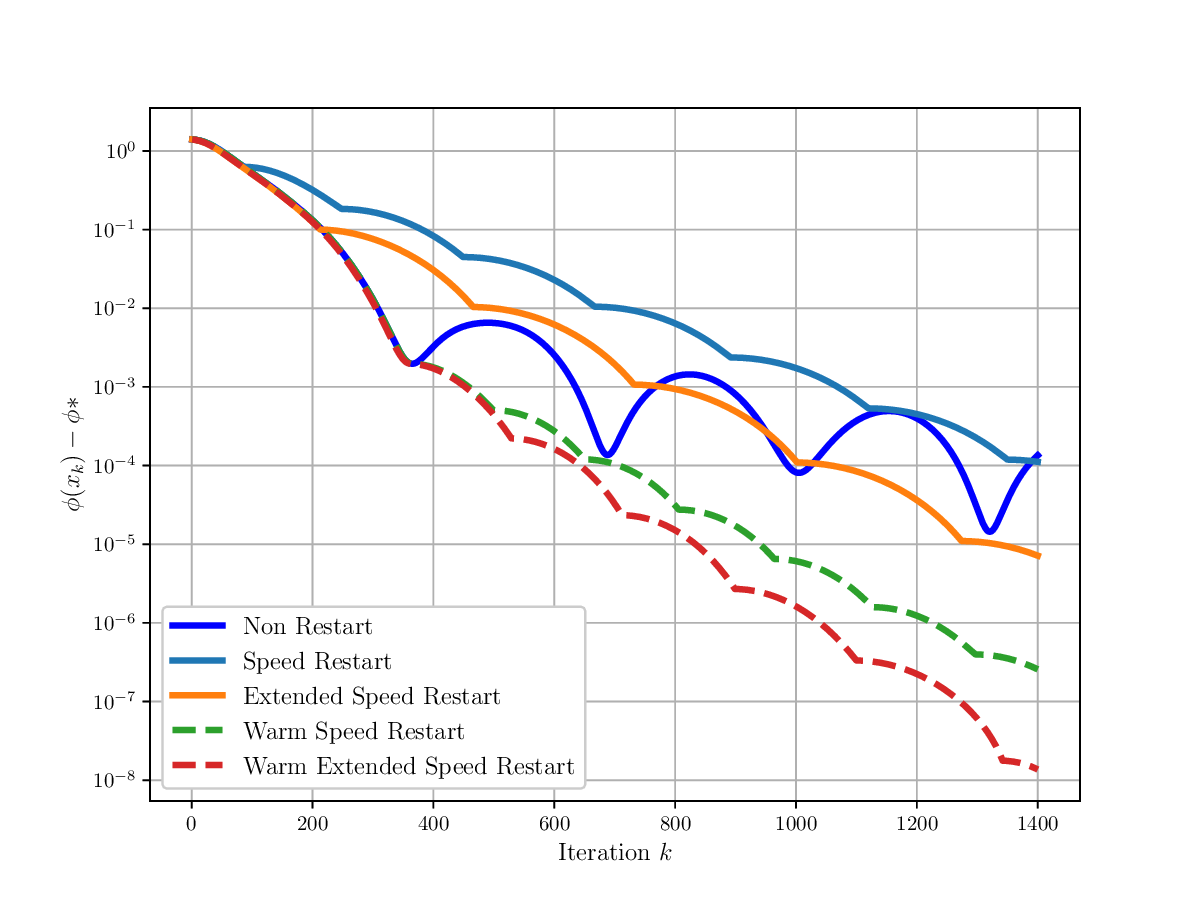}\label{fig:warm_log}}
	\caption{Values of $\phi(x_{k})-\phi^{*}$ along the iterations for non restart, speed restart scheme ($\lambda=0$), extended speed restart and the versions with the warm start.}
\label{fig:discrete_warm}
\end{figure}
\begin{table}[htbp]
    \centering
    \begin{tabular}{ccccc}
    \toprule
    \multirow{2}{*}{$\phi(x)$}& & Warm & Extended  & Warm Extended \\
    & & Speed Restart & Speed Restart & Speed Restart \\
     \midrule
     \multirow{2}{*}{$\frac{1}{2}\paren{x_1^2+10x_2^2+100x_3^2}$}  & $A$ & 1.391e-02   & 1.921e+01 & 4.054e-02\\
    &$B$ & 6.064e-02 & 8.911e-02   & 8.907e-02 \\
      \midrule
      \multirow{2}{*}{$\frac{1}{2}x^T\mathcal{A}x + bx$}& $A$ & 1.374e+01   & 1.582e+02  & 2.100e+01\\
    &$B$ & 2.430e-02 & 3.374e-02   & 3.243e-02\\
       \midrule
      \multirow[c]{2}{*}{$\rho \log\left(\ds\sum_{i=1}^{m}\exp((a_{i}^Tx-b_{i})/\rho)\right)$} & $A$ & 5.476e-02   & 9.825e-01 & 1.321e-01\\ [2mm]
    &$B$ & 9.002e-03 &8.659e-03   & 1.139e-02 \\ [2mm]
    \bottomrule
    \end{tabular}
    \caption{Coefficients of the approximation $\phi\big(x_k\big) - \phi^*\sim Ae^{-Bk}$, for the different choices of $\phi$ presented in Section \ref{sec:discrete} over the realizations of Algorithm \ref{algorithm:restart} with a warm start.}
    \label{tab:coefs_discrete_warm}
\end{table}

\subsubsection{Kernel Regularized Learning}
In this section, we present our results in the context of Kernel Regularized Learning \cite{hofmann2008kernel,mendelson2010regularization,scholkopf2002learning,yan2023confidence}. In this framework, we aim to estimate a function $\phi$ by solving the optimization problem
\[\min_{\phi \in\mathcal{H}} \sum_{i=1}^{n}\ell(b_i,\phi(a_i)) + \frac{\theta}{2}\left\Vert \phi \right\Vert^2_{\mathcal{H}},\]
where $(a_i,b_i),$ $i=1,\ldots,n$ are i.i.d. observations drawn from a certain distribution, $\ell$ is a loss function, $\mathcal{H}$ is a reproducing Kernel Hilbert space associated with a Kernel function $\mathcal{K}$ and $\theta>0$ is a regularization parameter.
As shown in \cite{sain1996nature,wahba1990spline}, the solution of this problem lies in the span of the kernel functions evaluated at the training points. Consequently, the problem can be rewritten as the finite-dimensional optimization problem
\begin{equation}\label{eq:prob_kernel}
    \min_{x \in \R^n} \sum_{i=1}^{n}\ell(b_i,\left( \mathcal{K}x\right)_i) + \frac{\theta}{2} x^\top \mathcal{K}x.
\end{equation}
The main challenges in solving problem \eqref{eq:prob_kernel} arise from the structure of the loss function $\ell$ and, more importantly, from the potentially large sample size $n$. In this section, we intend to replicate the experiments in \cite{heng2025inertial}, where the authors address this difficulty by using random projection techniques, to restrict the solution to an $m$-dimensional space, with $m\ll n$. Let us consider $G \in \R^{m\times n}$ as a random projection matrix, and the problem to solve becomes
\begin{equation}\label{eq:prob_kernel_m}
    \min_{x \in \R^m} \sum_{i=1}^{n}\ell\left(b_i,\left( \mathcal{K}G^\top x\right)_i\right) + \frac{\theta}{2} x^\top G\mathcal{K}G^\top x.
\end{equation}
In order to replicate the experiments in \cite{heng2025inertial}, we design the \textit{sketching matrix} $G$ using the Nystr{\"o}m method \cite{rudi2015less,williams2000using}: as uniformly sampled rows from the $n \times n$ identity matrix. We focus on two problems defined by the loss function $\ell$: Kernel logistic regression and Kernel multinomial regression.
\begin{itemize}
    \item \textbf{Kernel logistic regression: } in this context, we aim to minimize the objective
\[\phi(x) = -\sum_{i=1}^{n}\left[b_i \log(p_i) + (1-b_i)\log(1-p_i)\right] + \frac{\theta}{2} x^\top G\mathcal{K}G^\top x,\]
where 
\[p_i = \dfrac{1}{1+\exp\left( -(\mathcal{K}G^\top x )_i\right)}, \quad b_i \in \left\lbrace 0,1\right\rbrace.\]
The gradient can be explicitly computed as
\[\nabla \phi(x) = -G\mathcal{K}(b-p) + \theta G \mathcal{K} G^\top x.\]
\item \textbf{Kernel multinomial regression: } in this context, $b_i \in \left\lbrace 1,\ldots,q \right\rbrace$ indicates the category of sample $i$, and using the $q$-th category as the reference category, the probability $p_{ij}$ that $b_i=j$ is given by
    \[ p_{ij}(X) = \left\lbrace\begin{array}{rl}
       \dfrac{\exp(\eta_{ij})}{1+\sum_{k=1}^{q-1}\exp(\eta_{ik})}  & i\leq j < q  \\ [5mm]
       \dfrac{1}{1+\sum_{k=1}^{q-1}\exp(\eta_{ik})}  & j=q,
    \end{array}\right.\]
    where $\eta_{ij} = \left( \mathcal{K}G X_j\right)_i$, and $X_j$ is the $j-$th column of the $m\times(q-1)$ coefficient matrix $X$. Then, the objective function is 
    \[\phi(X) = - \sum_{i=1}^{n}\sum_{j=1}^{q} \iota_{\lbrace i=j \rbrace}\log(p_{ij}) + \frac{\theta}{2} \text{tr}\left( X^\top G \mathcal{K} G^\top X\right)\!,\]
    where $\iota$ is the indicator function and $\text{tr}(\cdot)$ denotes the trace of a matrix. Let $B \in \R^{n\times(q-1)}$ and $P \in \R^{n\times(q-1)}$, where each row of $B$ is a binary vector such that $b_{ij}=1$ if $b_i=j$, and each row of $P$ contains the corresponding probabilities $p_{ij}$ for $j=1,\ldots,q-1$. By vectorizing $X$ as $\text{vec}(X)$, the gradient is expressed as 
    \[\nabla \phi(\text{vec}(X)) = - \text{vec}(G\mathcal{K}(B-P)) + \theta \,\text{vec}(G \mathcal{K} G^\top X).\]
\end{itemize}
For the numerical tests, we generate the simulated data such as in \cite{heng2025inertial}, with $n=2^{14}$ , $d=50$ and for the multinomial we consider $q=3$. We use the Radial Basis Function (RBF) kernel $\mathcal{K}(a_1,a_2) = \exp\left\lbrace - \frac{1}{2}\left\Vert a_1 - a_2 \right\Vert^2\right\rbrace$, $\theta=10^{-4}$, and set $G$ to have $m=2^{11}$ rows. Figure \ref{fig:kernel} shows the performance of Algorithm \ref{algorithm:restart} in both contexts, using $\alpha=3$, $\beta = h = 1/\sqrt{L}$, the initial points consisting only of zeros, and different values of $\lambda$ for the restart. We also present one realization of the warm sart for the larger value of $\lambda$. For each case, we compute a reference solution $\phi^*$: for both cases, we use one of the restart methods performing more iterations. 

\begin{figure}[h]
    \centering
    \subfigure[Logistic]{\includegraphics[width=0.48\textwidth]{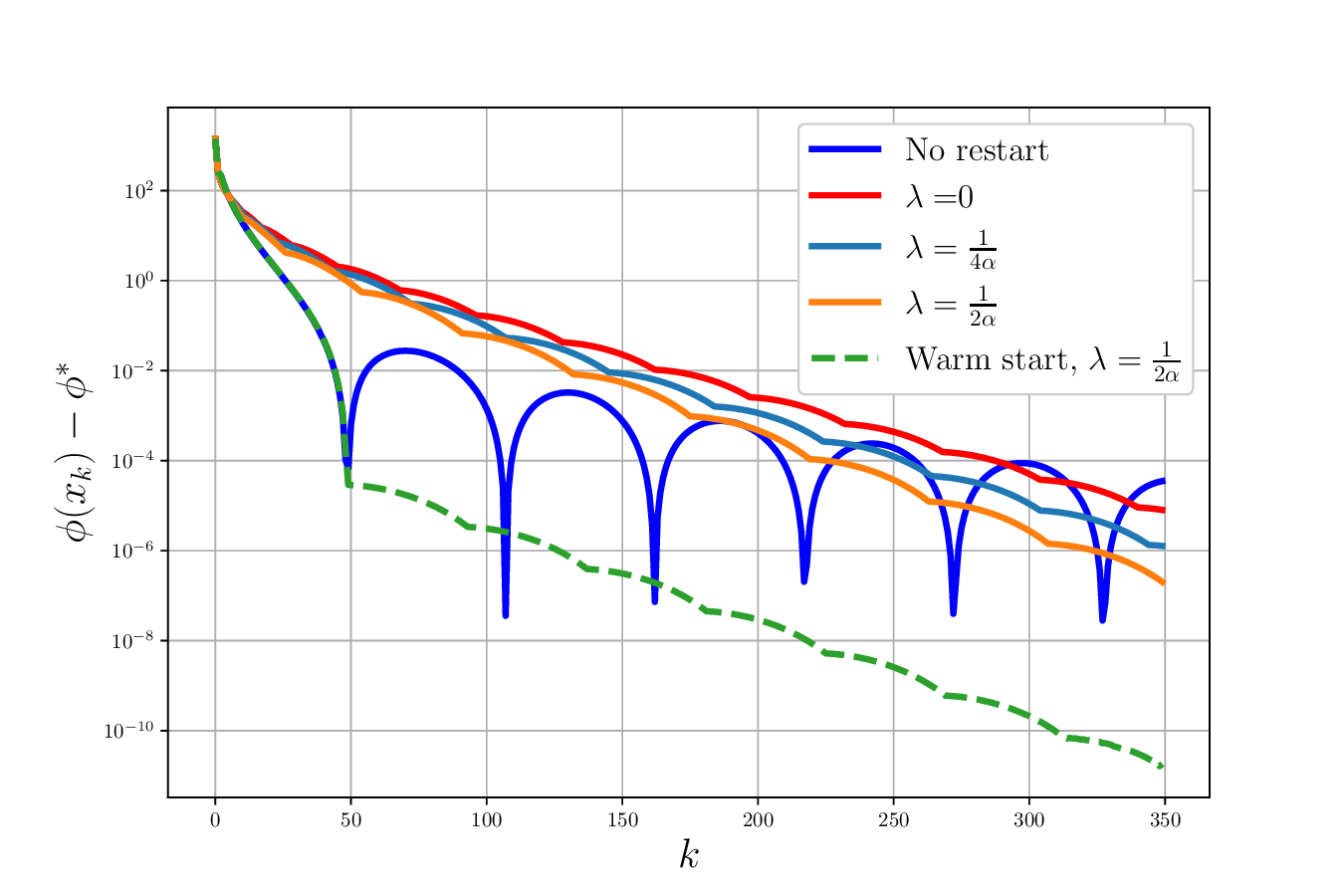}}
    \subfigure[Multinomial]{\includegraphics[width=0.48\textwidth]{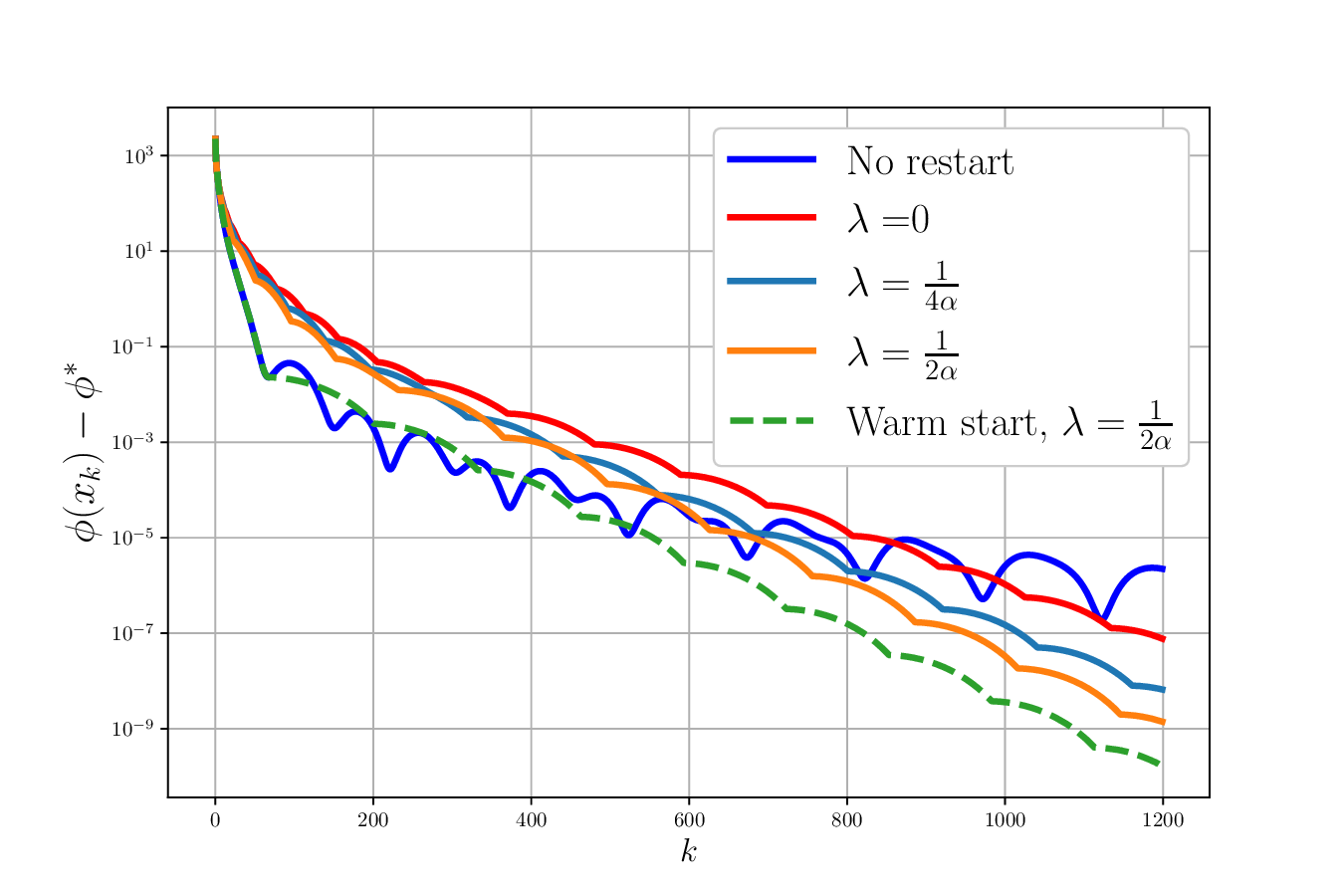}}
    \caption{Kernel regularized learning.}
    \label{fig:kernel}
\end{figure}


\section{Conclusion}\label{sec:conculsion}
We propose a new speed restart scheme for an inertial dynamical system with Hessian-driven damping \eqref{eq:din_avd}. The function values along the trajectories are proven to converge linearly when the objective function is $\mu$-strongly convex. The extended speed restart scheme presented can be interpreted as an interpolation of the speed restart and the function values restart. Numerical simulations indicate a significant improvement over existing restart strategies. \\

Our research opens new questions: First, although the extended speed restart scheme can be implemented for any $\lambda\in[0,1]$, an upper bound for the restarting time has not been established for $\lambda > \frac{1}{2\alpha}$. If $\alpha\ge \frac{1}{2}$, this range includes the important case $\lambda=1$, which matches the function values restart scheme when $\beta=0$. Finding such an upper bound would immediately imply the linear convergence of the function value restart, in view of Lemma \ref{L:abstract_linear}. The convergence rate also depends on the factor $Q$ computed in Proposition \ref{p:values_decrease}, which might be suboptimal. Determining the minimal possible value of $Q$ for which the rate holds across the class of $\mathcal{C}^2$, $\mu$-strongly convex functions with Lipschitz gradients remains an open problem. The nonsmooth setting is also yet to be analyzed. Extending the speed restart scheme to obtain linear convergence for \eqref{eq:hr_dinavd} appears to be significantly more challenging. In contrast with the present setting, the analysis for establishing suitable lower bounds on the restarting times would require a complete different technique. Developing new analytical techniques to handle these bounds and prove convergence under restart schemes remains an open problem. Finally, we do not address the convergence analysis of the heuristic restarting mechanism for algorithms, obtained by discretization of our criterion, which would provide a linear convergence rate guarantee for restarted first-order methods.




\section*{Acknowledgements} {\small The first author (J.J. Maulén) was supported by ANID (Chile) Fondecyt Postdoctoral grant 3250609, and CMM BASAL funds for the Center of Excellence FB210005. This benefited from the support of the FMJH Program Gaspard Monge for optimization and operations research and their interactions with data science. }
\bibliographystyle{abbrv}
\bibliography{referencias}
\end{document}